\documentclass{amsart}
\usepackage{graphicx}
\usepackage{enumerate,amssymb,amsmath,graphics,epsfig}
\usepackage{color,stmaryrd}
\usepackage{xspace}
\usepackage{accents}
\usepackage{xcolor, graphicx, subfigure, float}

\newcommand\RE{\mathbb{R}}
\renewcommand\div{\mathop{\rm{div}}\nolimits}

\newcommand\Grad{\mathop{\boldsymbol\nabla}\nolimits}

\newcommand\grad{\mathop\nabla\nolimits}
\newcommand\Huo{H^1_0(\Omega)^d}
\newcommand\Hub{H^1(\Sigma)^d}
\newcommand\HhalfB{\frac12,\Sigma}
\newcommand\Ldo{L^2_0(\Omega)}

\newcommand\dt{\tau}

\newcommand\triaB{\mathcal{S}_h}
\newcommand\Vh{\V_h}
\newcommand\Qh{Q_h}
\newcommand\Wh{\W_h}

\newcommand\Lh{{\boldsymbol \Lambda}_h}

 \newcommand*{\DOT}[1]{%
	\accentset{\mbox{\bfseries .}}{#1}} 
\renewcommand\c{\mathbf{c}}
\newcommand\f{\mathbf{f}}
\renewcommand\d{\mathbf{d}}
\newcommand\dd{\DOT\d}
\newcommand\n{\mathbf{n}}

\renewcommand\u{\mathbf{u}}
\renewcommand\v{\mathbf{v}}
\newcommand\w{\mathbf{w}}

\newcommand\z{\mathbf{z}}
\newcommand\X{\mathbf{X}}
\newcommand\V{\mathbf{V}}
\newcommand\W{\mathbf{W}}
%
\newcommand\ssigma{\boldsymbol{\sigma}}
\newcommand\llambda{\boldsymbol{\lambda}}
\newcommand\LL{\boldsymbol{\Lambda}}
\newcommand\mmu{{\boldsymbol{\mu}}}
\newcommand\pphi{{\boldsymbol{\phi}}}
\newcommand\eeps{{\boldsymbol{\epsilon}}}

%
%
%

\definecolor{BLUE}{rgb}{0,0,1}
%
%
%
\newcommand\St{\Sigma(t)}
\newcommand\Ot{\Omega(t)}
\newcommand\ns{\n}

\newcommand{\jump}[1]{\llbracket #1 \rrbracket }
\newcommand\rf{\rho^{\rm f}}
\newcommand\rs{\rho^{\rm s}\varepsilon}
\newcommand\Z{\boldsymbol{\mathcal{Z}}}
\newcommand\pt{\partial_t}
\newcommand\pd{\partial_\tau}
\newcommand\pdd{\partial_{\tau\tau}}
\renewcommand\L{\mathbf{L}}
\newcommand\T{\mathcal{T}}
\renewcommand\S{\mathcal{S}}
\newcommand\af{a^{\rm f}}
\newcommand\as{a^{\rm s}}
\newcommand\sh{s_h}
\renewcommand\Lsh{\L_h}
\newcommand\uh{\u_h}
\newcommand\ph{p_h}
\renewcommand\dh{\d_h}
\newcommand\ddh{\dd_h}
\newcommand\lh{\llambda_h}
\newcommand\pph{\pphi_h}
\newcommand\un{\u^n}
\newcommand\pn{p^n}
\newcommand\dn{\d^n}
\newcommand\ddn{\dd^n}
\renewcommand\ln{\llambda^n}
\newcommand\unp{\u^n_\Pi}
\newcommand\pnp{p^n_\Pi}
\newcommand\dnp{\d^n_\Pi}
\newcommand\ddnp{\dd^n_\Pi}
\newcommand\ddnup{\dd^{n-1}_\Pi}
\newcommand\lnp{\llambda^n_\Pi}
\newcommand\uhn{\u_h^n}
\newcommand\phn{p_h^n}
\newcommand\dhn{\d_h^n}
\newcommand\ddhn{\dd_h^n}
\newcommand\lhn{\llambda_h^n}
\newcommand\pphn{\pphi_h^n}
\newcommand\ddhnn{\dd_h^{n-\frac12}}
\newcommand\dnstar{\d^{n*}}
\newcommand\dhnstar{\d_h^{n*}}
\newcommand\xinstar{\boldsymbol{\xi}_h^{n*}}
\newcommand\uhnu{\u_h^{n-1}}

\newcommand\dhnu{\d_h^{n-1}}
\newcommand\ddhnu{\dd_h^{n-1}}

\newcommand\pphnu{\pphi_h^{n-1}}
\newcommand\tnp{\boldsymbol{\theta}^n_\Pi}
\newcommand\tnh{\boldsymbol{\theta}^n_h}
\newcommand\phinp{\phi^n_\Pi}
\newcommand\phinh{\phi^n_h}
\newcommand\xinp{\boldsymbol{\xi}^n_\Pi}
\newcommand\xinh{\boldsymbol{\xi}^n_h}
\newcommand\dxinp{\DOT{\boldsymbol{\xi}}^n_\Pi}
\newcommand\dxinh{\DOT{\boldsymbol{\xi}}^n_h}
\newcommand\omnp{\boldsymbol{\omega}^n_\Pi}
\newcommand\omnh{\boldsymbol{\omega}^n_h}
\newcommand\cchinh{\boldsymbol{\chi}_h^n}
\newcommand\tnuh{\boldsymbol{\theta}^{n-1}_h}
\newcommand\xinuh{\boldsymbol{\xi}^{n-1}_h}
\newcommand\dxinuh{\DOT{\boldsymbol{\xi}}^{n-1}_h}
\newcommand\Ehn{{E}^n_h}
\newcommand\Eho{{E}^0_h}
\newcommand\Dhn{{D}^n_h}

\newcommand\OO{{0,\Omega}}
\newcommand\OS{{0,\Sigma}}
\newcommand\PPi{\boldsymbol{\Pi}}
\newcommand\PV{\PPi_V}
\newcommand\PQ{\PPi_Q}
\newcommand\PW{\PPi_W}
\newcommand\PL{\PPi_\Lambda}
%
 
%

\theoremstyle{plain}
\newtheorem{lem}{Lemma}
\newtheorem{problm}{Problem}
\newtheorem{algo}{Algorithm}
\newtheorem{thm}{Theorem}

\theoremstyle{definition}
\theoremstyle{remark}
\newtheorem*{remark}{Remark}
%
%
%
\usepackage{subfigure}
\usepackage{colortbl}
\usepackage{pgfplots}
\usepackage{pgfplotstable}
\usepackage{tikz}
\usepackage{tkz-euclide}
\usetkzobj{all}
\usetikzlibrary{decorations.pathreplacing,shapes.misc,calc,intersections,arrows,external}
\usepgfplotslibrary{external}
\tikzsetexternalprefix{./figures/}
\tikzexternalize

\usepackage{booktabs}

\graphicspath{{./figures/}}

\begin{document}

\title[Splitting schemes for a Lagrange multiplier formulation]
{Splitting schemes for a Lagrange multiplier formulation of FSI with
immersed thin-walled structure: stability and convergence analysis}

\author{Michele Annese}
\address{DICATAM, Università\`a degli Studi Brescia, 25123 Brescia, Italy}
\email{m.annese@unibs.it}
\author{Miguel A. Fern\'andez}
\address{Inria, 75012 Paris, France and
Sorbonne Universit\'e and CNRS,  LJLL UMR 7598, 75005 Paris, France}
\email{miguel.fernandez@inria.fr}
\author{Lucia Gastaldi}
\address{DICATAM, Universit\`a degli Studi di Brescia, 25123 Brescia, Italy}
\email{lucia.gastaldi@unibs.it}
\urladdr{http://lucia-gastaldi.unibs.it}

\begin{abstract}
{ The numerical approximation of incompressible fluid-structure
interaction problems with Lagrange multiplier is generally based on strongly
coupled schemes. This delivers unconditional stability but at the expense of
solving a computationally demanding coupled system at each time-step. For the
case of the coupling with immersed thin-walled solids, we introduce a class of
semi-implicit coupling schemes which avoids strongly coupling without
compromising stability and accuracy. A priori energy and error estimates are
derived. The theoretical results are illustrated through  numerical experiments
in an academic benchmark.}
{fluid-structure interaction, immersed boundary method,  
Lagrange multiplier, finite elements, time-splitting schemes.}
\end{abstract}
\maketitle

\section{Introduction}
\label{se:intro}

The numerical simulation of multi-physics systems coupling 
an incompressible viscous fluid with an immersed thin-walled elasitc solid is of major importance  in 
many engineering and living systems. Among the examples, we can mention the 
aeroelasticity of parachutes and sailing boats and the mechanics of capsules, 
biological cells and heart valves  (see, e.g., \cite{lombardi-et-al-2012,takizawa-tezduyar-12,eswaran-et-al-09,paidoussis-et-al-11,pozrikidis-10,heil-hazel-11,mullert-et-al-10,tian-et-al-14}).

 These coupled problems often feature large interface displacements, with potential contact between solids, so that  the 
 favored spatial discretization is mainly  based on  unfitted mesh approximations (the fluid mesh is not fitted to the fluid-solid interface). 
 Among these methods, the most popular are the  immersed boundary method (see,
e.g., \cite{peskin-02,newren-et-al-07,boffi-cavallini-gastaldi-11}) and the
fictitious domain method   (see, e.g.,
\cite{glowinski-pan-hesla-joseph-99,baaijens-01,dehart-et-al-03,astorino-et-al-09,kamensky-et-al-15,boffi2015finite,alauzet-et-al-15,BG2017,casquero-et-al-18}), which treat the solid in its natural Lagrangian formalism.    
We refer to \cite{boilevin-fernandez-gerbeau-19} for a recent numerical study which compares some of these approaches.

Over the last decade, significant advances have been achieved in the development and the analysis of time splitting schemes that avoid strong coupling without compromising stability and accuracy. The majority of these studies is limited to fitted fluid and solid meshes (see, e.g., \cite{fernandez-gerbeau-grandmont-07,quaini-quarteroni-07,badia-quaini-quarteroni-08,burman-fernandez-09,
 guidoboni-et-al-09,bukac-et-al-2012,lukacova-et-al-12,fernandez-13,fernandez-mullaert-vidrascu-13,banks-et-al-14b,fernandez-mullaert-vidrascu-15,fernandez-landajuela-vidrascu-15,landajuela-et-al-15}). Within the unfitted mesh framework, splitting schemes which efficiently avoid strong coupling are much more rare. 
The original time-stepping scheme of the immersed boundary method uncouples the 
fluid and solid time-marchings (actually, the solid solver is never called) but at the price of enforcing  severe time-step restrictions for stability 
(see, e.g., \cite{boffi-cavallini-gastaldi-11}).  The splitting schemes
reported in
\cite{burman-fernandez-14,alauzet-et-al-15,kim-lee-choi-18,kapada-et-al-18} are
also known to enforce severe time-step restrictions for stability/accuracy 
or to be sensitive to the amount of added-mass effect.

In the present paper, we introduce a semi-implicit coupling scheme
for a formulation based on the introduction of a Lagrange multiplier which
avoids the above mentioned issues. The proposed approach generalizes the ideas
introduced in \cite{Fernandez2013,fernandez-landajuela-20} to the case of
unfitted mesh approximations with Lagrange multipliers (see
\cite{BCG15,BG2017}). The analysis shows, in particular, that the scheme with
first-order extrapolation   yields unconditional stability and optimal
(first-order) accuracy in time.  To the best of our knowledge, this is the
first time that the full numerical analysis is addressed for linear
incompressible fluid-structure interaction problems with Lagrange multipliers.
Numerical experiments in an academic test case illustrate the behavior of the proposed approach.

%

 The rest of the paper is organized as follows. 
Section~\ref{se:pb} presents the coupled system and its weak formulation with Lagrange multipliers.
The numerical methods are described in Section~\ref{sect:nummed}.
Section \ref{sec:analysis} presents the stability and the error analysis. 
Numerical evidence of the theoretical findings  is provided in Section~\ref{sect:exp}.

\section{Problem setting and weak formulation}
\label{se:pb}
We consider fluid-structure interaction problems characterized by a thin-walled
structure immersed in an incompressible viscous fluid. Let $\Omega\subset\RE^d$, $d=2,3$,
be a fixed bounded domain with Lipschitz continuous boundary $\Gamma$. 
In order to describe the dynamics of the structure immersed in the fluid, we use
a Lagrangian framework. The elastic thin-walled structure is represented by its {mid-surface (i.e., a curve if $d=2$ or a surface if $d=3$)}. Let $\Sigma\subset\RE^{d}$ be the reference
configuration of the thin-walled structure {mid-surface}. Its current position, denoted by $\St$,
is obtained as the image of the deformation mapping
$\pphi(\cdot,t):\Sigma\to\St\subset\Omega$.
The domain occupied by the fluid is denoted $\Ot = \Omega \backslash \Sigma(t)$ and its boundary by 
$\partial\Ot=\Gamma\cup\St$.
We assume that the interface $\St$ is oriented by a unitary normal vector field  $\ns$,  
which induces a positive and a negative side in the fluid domain $\Omega(t)$, with respective 
unit normals  $\ns^{\rm +} := \ns $ and $\ns^{\rm -} := -\ns  $ on $\Sigma(t)$. 
Thus, 
we can define the positive and negative sided-restrictions to $\Sigma(t)$ of  a given  field $f$ 
defined in $\Omega(t)$,  as 
$f^{+}({\bf x}) := \lim_{\xi \to 0^{+}} f({\bf x} + \xi {\bf n}^{+}), \quad
f^{-}({\bf x}) := \lim_{\xi \to 0^{-}} f({\bf x} + \xi {\bf n}^{-} )$,  $
\forall{\bf x} \in \Sigma(t) $, and the normal jump 
$
 \jump{f{\bf n}  } := f^{+}{\bf n}^{\rm{+}} + f^{-}{\bf n}^{-}. $ 

\begin{figure}[!h]

  \centering
\includegraphics{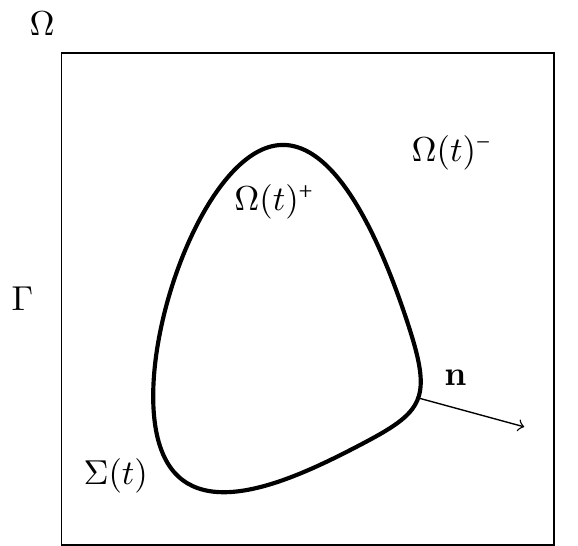}
  \caption{Geometrical configuration of the FSI problem.}
    \label{fig:domain}
\end{figure}

We adopt the Eulerian framework to write the Navier-Stokes equations which
govern the dynamics of the incompressible fluid and the Lagrangian framework
for the elastodynamics of the thin structure. We denote by
$\ssigma^{\rm f}:=-p\mathbb{I}+2\mu\eeps(\u)$ the Cauchy stress tensor for
incompressible fluids, where $\eeps(\u):=(\Grad\u+\Grad\u^\top)/2$ is
the strain tensor. We assume that the abstract linear surface differential operator
$\L$ describes the solid elastic effects.
Hence, we have the following
coupled problem. 
\begin{problm}
\label{pb:coupled}
Given $\u_0$, $\d_0$ and $\d_1$,
for $t\in]0,T]$, find the fluid velocity $\u$, the pressure $p$ in $\Ot$,
the solid displacement $\d$ and velocity $\dd$ in $\Sigma$ such that:
\begin{itemize}
\item\textbf{Fluid sub-problem}:
\begin{equation}
\label{eq:NS}
\left\{
\aligned
&\rf\big(\pt\u+\u\Grad\u\big)-\div\ssigma^{\rm f}=\bf 0
&& \text{in }\Ot,\\
&\div\u=0 && \text{in }\Ot,\\
&\u=\bf 0&& \text{on }\Gamma.
\endaligned\right.
\end{equation}
\item\textbf{Solid sub-problem}:
\begin{equation}
\label{eq:solid}\left\{ 
\aligned
&\rs\pt\dd+\L\d=\f_{\Sigma}&&
\text{in }\Sigma,\\
&\pt\d=\dd && \text{in }\Sigma,\\
&\d=\bf 0&&\text{on }\partial\Sigma.
\endaligned\right. 
\end{equation}
\item\textbf{Interface coupling conditions}:
\begin{equation}
\label{eq:coupling}\left\{ 
\aligned
&\pphi= 
\boldsymbol  I
+\d\quad\text{in}\quad\Sigma,\\
&\u\circ\pphi=\dd\quad \text{on}\quad\Sigma,\\
&\int_\Sigma \f_{\Sigma} \cdot \w=-\int_{\St}{\jump{\ssigma^{\rm f}\n}} \cdot \w\circ\pphi_t^{-1}\quad\forall 
\w:\Sigma \rightarrow \RE^d \text{ smooth.}
\endaligned\right. 
\end{equation}
\item\textbf{Initial conditions}:
\begin{equation}
\label{eq:init}\left\{ 
\aligned
&\u(\cdot,0)=\u_0&&\text{ in }\Omega(0),\\
&\d(\cdot,0)=\d_0,\quad \dd(\cdot,0)=\d_1&&\text{ in }\Sigma.
\endaligned\right. 
\end{equation}
\end{itemize}
%
\end{problm}

The relation \eqref{eq:coupling}$_2$ enforces the so-called kinematic coupling condition 
(continuity of velocity across the interface), while \eqref{eq:coupling}$_3$  states 
that the tractions along the immersed interface have to be equilibrated (dynamic coupling).
\begin{remark}
In Problem~\ref{pb:coupled}, the solid mid-surface is fully identified 
with the fluid-solid interface $\Sigma$, by neglecting the solid thickness effects in the interface coupling.
This is a rather widespread modeling assumption when coupling thin-walled solids with a 3D media (see, e.g., \cite{chapelle-ferent-03,landajuela-et-al-17}). Yet, in the context of immersed boundary methods, a correction term 
is often introduced to remove the \emph{across-the-thickness} additional fluid mass (see, e.g., \cite{BCG2011,BCG15}). 
\end{remark}

In the following, we introduce the weak formulation of {Problem~\ref{pb:coupled}}. We {shall} make use
of the standard Sobolev space $H^1_0(D)^d$ of the vector valued functions in
$H^1(D)$ which vanish on the boundary $\Gamma$, and of $L^2_0(D)$ the subspace
of functions in $L^2(D)$ with zero mean value in $D$. The corresponding norms
are denoted by $\|\cdot\|_1$ and $\|\cdot\|_0$, respectively.  The scalar
product in $L^2(D)$ is denoted by $(\cdot,\cdot)_D$. The subscript is dropped
if $D=\Omega$. We denote by $\W\subseteq\Hub$ the subspace of admissible
deformation which satisfy the Dirichlet boundary condition for the solid.
Moreover, we shall use the bilinear forms $a^{\rm f}:(\Huo\times  \Ldo) \times(\Huo\times  \Ldo) \to\RE$ and
$a^{\rm s}:\W\times\W\to\RE$ and the   trilinear form
$b:\Huo\times\Huo\times\Huo\to\RE$, defined by:
\[
\aligned
&\af\big((\u,p),(\v,q)\big):=2\mu \big(\eeps(\u),\eeps(\v)\big)  -(\div\v, p) +  (\div\u, q),\\\
&b(\z,\u,\v):=\frac{\rf}2\big((\z\cdot\grad\u,\v)-(\z\cdot\grad\v,\u)\big),\quad \as(\d,\w):=(\L\d,\w)_\Sigma=(\d,\L\w)_\Sigma.
\endaligned
\]
We assume that $\as$ is symmetric, continuous and coercive on $\W$ with
associated norm $\|\cdot\|_{\rm s}^2=\as(\cdot,\cdot)$ and that it commutes with the
time derivative, that is $\pt\as(\w(t),\w(t))=2\as(\pt\w(t),\w(t))$.

By multiplying  the relations~\eqref{eq:NS} and~\eqref{eq:solid} by 
${\bf v} \in \Huo$, $q\in \Ldo $, $\w \in \W$ and after integration by parts, and, taking into account the boundary
conditions, we obtain:
\begin{equation}\label{eq:nobc}
\aligned
&\rf\left(\pt\u,\v\right)+{b(\u,\u,\v)}+ 
\af\big((\u,p),(\v,q)\big)-({\jump{\ssigma_f\n}},\v)_{\St}=0,\\
&\rs(\pt\dd,\w)_\Sigma+\as(\d,\w)=(\f_{\Sigma},\w)_\Sigma.
\endaligned
\end{equation}
These relations are coupled through conditions~\eqref{eq:coupling}. 
In this work, we will enforce {the kinematic coupling \eqref{eq:coupling}$_1$} variationally using Lagrange multipliers (see, e.g., \cite{BCG15}).  To this purpose 
we introduce the Lagrange multiplier space $\LL:=(\HhalfB)'$, the trace space $\Z:=\HhalfB$ and the bilinear form $
 c:\LL\times\Z\to\RE$, defined as 
 \begin{equation}
\label{eq:defc}
c(\mmu,\w):=\langle\mmu,\w\rangle\quad 
\forall\mmu\in\LL, \w\in\Z,
\end{equation}
{where $\langle\cdot,\cdot\rangle$ denotes the duality pairing between $\LL$ and $\Z$.}
%
Problem~\ref{pb:coupled} can hence be formulated in weak form as follows:
\begin{problm}
\label{pb:weak}
Given $\u_0$, $\d_0$, $\d_1$  with $\u_0\circ\pphi_0=\d_1$ in
$\Sigma$, for $t\in]0,T]$, find $\u(t)\in\Huo$, $p(t)\in\Ldo$, $\d(t)\in{\W}$,
$\dd(t)\in{\W}$ and $\llambda(t)\in\LL$ such that:
\begin{equation}
\label{eq:weak}
\left\{
\aligned
&\rf(\pt\u(t),\v)+b(\u(t),\u(t),\v)+\af\big((\u(t),p(t)),(\v,q)\big)\\
&\quad+c(\llambda(t),(\v\circ\pphi)(t))-c(\mmu,(\u\circ\pphi)(t)-\dd(t))=0\\
& \hspace{4cm}\forall(\v,q,\mmu)\in\Huo\times \Ldo\times \LL,\\
&\rs(\pt\dd(t),\w)_\Sigma+\as(\d(t),\w)=c(\llambda(t),\w) 
\qquad\qquad\forall\w\in{\W},\\
&\dd(t)= \pt\d(t),\\
&\pphi(t)=\boldsymbol I+\d(t)\text{ in }\Sigma,\\
&\u(0)=\u_0\text{ in }\Omega,\ 
\d(0)=\d_0\text{ in }\Sigma,\ 
\dd(0)=\d_1\text{ in }\Sigma.
\endaligned\right.
\end{equation}
\end{problm}
Comparing Problem~\ref{pb:weak} with the integral formulation \eqref{eq:nobc}, we
see that the Lagrange multiplier corresponds to the fluid-structure interaction
forces acting on the structure $\llambda=\f_{\Sigma}$. By taking $\v=\u(t)$, $q=p(t)$, $\w=\dd(t)$ and $\mmu=\llambda(t)$ in
Problem~\ref{pb:weak}, we have the following energy estimate:
\begin{equation}
\label{eq:energy}
\aligned
&\rf\|\u(t)\|_\OO^2+4\mu\int_0^t\|\eeps(\u(s))\|^2_\OO {\rm d}s
+\rs\|\dd(t)\|^2_\OS+\|\d(t)\|_{\rm s}^2\\
&\qquad=\rf\|\u_0\|^2_\OO+\|\d_0\|_{\rm s}^2+\rs\|\dd_1\|^2_\OS.
\endaligned
\end{equation}

\section{Numerical methods}\label{sect:nummed}

This sections is devoted to the numerical approximation of Problem~\ref{pb:weak}. The  next 
section presents the spatial discretization, using the immersed boundary/fictitious domain  finite element method 
with Lagrange multipliers (see \cite{BCG15,BG2017}). Section~\ref{se:splitting} is devoted 
to the time discretization. In particular,   new splitting schemes are introduced 
by generalizing the ideas introduced in \cite{Fernandez2013,fernandez-landajuela-20}.

\subsection{Unfitted mesh based semi-discretization in space}
The weak treatment of the interface coupling in Problem~\ref{pb:weak}, through the Lagrange multiplier, 
enables the arbitrary choice of the fluid and solid meshes. The main difficulty lies in the computation of the  
the coupling terms $c(\mmu,\v\circ\pphi)$, which require to evaluate the velocity basis functions
composed with the mapping $\pphi$, and, consequently, to intersect the current
configuration of the immersed solid with the fixed underlying fluid mesh. 
{This problem also arises in alternative unffited mesh methods  (see, e.g., \cite{AFFL2016}).}

Let us introduce the finite element spaces we shall use to
discretize the problem.
We can choose either a pair of space $\V_h$ and $Q_h$ which satisfy the inf-sup
conditions for the Stokes equations or stabilized finite elements.
In this paper, we choose the $\mathbb{P}_1/\mathbb{P}_1$ stabilized elements defined as follows. 
Let $\T_h$ be a regular subdivision of $\Omega$ into
triangles if $d=2$, tetrahedrons if $d=3$ and let $\S_h$ be a
regular subdivision of the reference domain $\Sigma$ into segments if $d=2$ or
triangles if $d=3$. We denote by $h_{\rm f}$ and $h_{\rm s}$ the meshsizes of $\T_h$ and
$\S_h$, respectively.  
We introduce the following finite element spaces
\begin{equation}
\label{eq:spaces}
\aligned
&\V_h=\{\v\in\Huo: \v|_K\in\mathbb{P}_1^d\ \forall K\in\T_h\},\\
&Q_h=\{q\in\Ldo: q|_K\in \mathbb{P}_1\ \forall K\in\T_h\},\\
&\W_h=\{\w\in\W: \w|_K\in \mathbb{P}_1^d\ \forall K\in\S_h\},\\
&\Lh=\{\mmu\in\Hub: \mmu|_K\in \mathbb{P}_1^d\ \forall K\in\S_h\},
\endaligned
\end{equation} 
where $ \mathbb{P}_1(K)$ stands for the space of affine polynomials on $K$. We use the 
stabilization technique introduced in \cite{brezzipitkaranta}, by adding the following term
in the discrete counterpart of~\eqref{eq:weak}:
\begin{equation}
\label{eq:stabilization}
\sh(p,q)=\gamma\sum_{K\in\T_h}h_K^2(\nabla p,\nabla q)_K\quad\forall p,q\in Q_h,
\end{equation} 
where $\gamma>0$ is a suitable {user-defined} constant. We shall use also
the broken norm $|q|_{\sh}^2=\sh(q,q)$, for all $q\in Q_h$, and the discrete Stokes bilinear form 
$$
\af_h\big((\u,p),(\v,q)\big) := \af\big((\u,p),(\v,q)\big) + \sh(p,q).
$$
The space semi discrete approximation of Problem~\ref{pb:weak} then reads:
\begin{problm}
\label{pb:weakh}
Given $\u_{0,h}\in\V_h$, $\d_{0,h}\in\W_h$, $\d_{1,h}\in\W_h$ and
$\pphi_{0,h}\in\W_h$, for $t\in[0,T]$, find $\uh(t)\in\V_h$, $\ph(t)\in Q_h$,
$\dh(t)\in\W_h$, $\ddh(t)\in\W_h$ and $\lh(t)\in\Lh$ such that:
\begin{equation}
\label{eq:weakh}
\left\{ 
\aligned
&\rf(\pt\uh(t),\v)+b(\uh(t),\uh(t),\v)+\af_h\big((\uh(t),p_h(t)),(\v,q)\big)\\
&\quad+c(\lh(t),\v\circ\pph(t)) - c(\mmu,(\uh\circ\pph)(t)-\ddh(t)) =0\\
&\hspace{4.3cm}\qquad\forall(\v,q,\mmu)\in\V_h\times Q_h\times\Lh,\\
&\rs(\pt\ddh(t),\w)_\Sigma+\as(\dh(t),\w)=c(\lh(t),\w)
\qquad\forall\w\in\W_h,\\
&\ddh(t) = \pt\dh(t),\\
&\pph(t)=\boldsymbol I+\dh(t)\text{ in }\Sigma,\\
&\uh(0)=\u_{0,h}\text{ in }\Omega,\ 
\dh(0)=\d_{0,h}\text{ in }\Sigma,\ 
\ddh(0)=\d_{1,h}\text{ in }\Sigma.
\endaligned\right.
\end{equation}
\end{problm}
Using the same argument as in the continuous case, we easily obtain the
discrete energy estimate:
\begin{equation}
\label{eq:energyh}
\aligned
&\rf\|\uh(t)\|_\OO^2+4\mu\int_0^t\|\eeps(\uh(s))\|^2_\OO{\mathrm d}s
+\rs\|\ddh(t)\|^2_\OS+\|\dh(t)\|_{\rm s}^2\\
&\qquad+\int_0^t|\ph(s)|^2_{\sh}{\mathrm d}s
=\rf\|\u_{0,h}\|^2_\OO+\|\d_{0,h}\|_{\rm s}^2+\rs\|\dd_{1,h}\|^2_\OS.
\endaligned
\end{equation}
\subsection{Time discretization and splitting schemes}
\label{se:splitting}
In this subsection, we present a semi-implicit time discretization of
Problem~\ref{pb:weakh}. Given a positive integer $N$, let $\dt=T/N$ be the time
step, and $t_n=n\dt$ for $n=0,\dots,N$.
For a given function $g$ depending on $t$, we adopt the following notation
\[
g^n:=g(t_n), \qquad \pd g^n:=\frac{g^n-g^{n-1}}{\tau},
\qquad
\pdd g^n:
=\frac{g^n-2g^{n-1}+g^{n-2}}{\tau^2}.
\]

\subsubsection{Strongly coupled scheme}
Using the backward Euler scheme and evaluating the term along the structure location at
the previous time step, we have the following strongly coupled scheme (see \cite{BCG15}).
\begin{algo}
\label{pb:fullydiscr}
Let  $\u_{0,h}\in\V_h$, $\d_{0,h}\in\W_h$, $\d_{1,h}\in\W_h$ and
$\pphi_{0,h}\in\W_h$ be given.  We set $\uh^0=\u_{0,h}$,  $\dh^0=\d_{0,h}$ and $\ddh^0=\d_{1,h}$.
For $n=1,\dots,N$, perform the following steps:
\begin{description}
\item Step 1. Find $(\uhn,\phn, \lhn, \dhn,\ddhn) \in \V_h \times Q_h\times \Lh\times \W_h\times \W_h$ such that:
\begin{equation}
\label{eq:fullydiscr}\left\{ 
\aligned
&\rf(\pd\uhn,\v)+b(\uhnu,\uhn,\v)+\af_h\big((\uhn,\phn),(\v,q)\big)\\
&\quad+ c(\lhn,\v\circ\pphnu)
-c(\mmu,\uhn\circ\pphnu-\ddhn)=0\\
&\hspace{4cm}\forall(\v,q,\mmu)\in\V_h\times Q_h \times \Lh,\\
&\rs(\pd\ddhn,\w)_\Sigma+\as(\dhn,\w)= c(\lhn,\w)
\qquad\forall\w\in\W_h,\\
&\ddhn = \pd\dhn.
\endaligned\right.
\end{equation}
\item Step 2. Update interface:
$
\pphn=\boldsymbol I+\dhn.
$
\end{description} 
\end{algo}
At each time step, Algorithm \ref{pb:fullydiscr} involves the solution of the monolithic system \eqref{eq:fullydiscr} with a saddle point structure. The existence and uniqueness of the continuous
and discrete versions of such problem have been analyzed in~\cite{BG2017} in
the case of inf-sup stable finite element discretization of the Stokes equation, and
optimal a priori error estimates have been deduced according to the theory of
discretization of saddle point problems (see, e.g.~\cite{BBF}). More recently,
the above analysis has been extended to cover the case of stabilized $\mathbb{P}_1/\mathbb{P}_1$ 
elements in~\cite{annese}.

Testing \eqref{eq:fullydiscr} with $(\v,q,\mmu,\w)=(\uhn,\phn,\lhn,\ddhn)$ and using the velocity-dis\-place\-ment relation 
$\ddhn= \pd\dhn$, the energy estimate \eqref{eq:energyh} extends also to Problem~\ref{pb:fullydiscr} in the following form (see also \cite[Proposition 4]{BCG15}):
\begin{multline*}
\rf\|\uhn\|^2_\OO+4\mu\sum_{m=1}^N\dt\|\eeps(\uh^m)\|^2_\OO 
+\rs\|\ddhn\|^2_\OS+ \|\dhn\|^2_{\rm s}+2\sum_{m=1}^N\dt|\ph^m|^2_{\sh}\\
\le \rf \|\u_h^0\|^2_\OO + \rs\|\ddh^0\|^2_\OS.
\end{multline*}
This guarantees the unconditional stability of the strongly coupled scheme provided by Algorithm~\ref{pb:fullydiscr}. 
It should be noted that this superior stability comes at the cost of solving  at each time step a high-dimensional heterogenous system, 
which can be ill conditioned and computational demanding.


\subsubsection{Splitting schemes}
In order to circumvent the computational complexity of the strong coupling (Algorithm~\ref{pb:fullydiscr}), 
the time discretizations of the original immersed boundary method introduced a significant time splitting in the computation 
of the fluid and solid fields  (see, e.g., \cite{Peskin1977,TP1992}).  
Basically, the idea consisted in treating explicitly the solid 
elastic contributions within the fluid and then retrieving the solid displacement directly from the interpolation of the fluid velocity into the solid grid.  
The fundamental drawback 
of this approach is that restrictive CFL-like conditions are required for
stability (see, e.g., \cite{TP1992,SW1999,BGH2007,BGH2007a,BCG2011}).
Within the context of the spatial approximation provided by Problem~\ref{pb:weakh}, this solution procedure would take the following form:
\begin{algo}
\label{pb:explicit}
Let  $\u_{0,h}\in\V_h$, $\d_{0,h}\in\W_h$, $\d_{1,h}\in\W_h$ and
$\pphi_{0,h}\in\W_h$ be given.  We set $\uh^0=\u_{0,h}$,  $\dh^0=\d_{0,h}$ and $\ddh^0=\d_{1,h}$
For $n=1,\dots,N$, perform the following steps:
\begin{description}
\item Step 1. Find $(\uhn,\phn,\lhn,\ddhn)\in\V_h\times Q_h \times \Lh\times \W_h$ such that:
\begin{equation}
\label{eq:split1expli}\left\{ 
\aligned
&\rf(\pd\uhn,\v)+b(\uhnu,\uhn,\v)+\af_h\big((\uhn,\phn),(\v,q)\big)\\
&\quad+c(\lhn,\v\circ\pphnu)-c(\mmu,\uhn\circ\pphnu-\ddhn)=0&&\quad\forall(\v,q,\mmu)\in\V_h\times Q_h\times \Lh,\\
&\rs(\pd\ddhn ,\w)_\Sigma=c(\lhn,\w)-\as(\dh^{n-1},\w)
&&\quad\forall\w\in\W_h.
\endaligned\right. 
\end{equation}
\item Step 2.  Update solid displacement: $\dh^n = \dh^{n-1}+\tau \ddhn$. 

\item Step 3. Update interface:
$
\pphn=\boldsymbol I+\dhn.
$
\end{description} 
\end{algo}
The main idea behind the splitting of Algorithm~\ref{pb:explicit}  is to treat separately the two forcing terms:
the solid inertial and elastic contributions are, respectively, implicitly and explicitly coupled with the fluid. 
The first avoids added-mass stability issues while the second introduces a certain degree of splitting in the time-discretization. 
Note that, contrarily to Algorithm~\ref{pb:fullydiscr}, the solid solver is never called in Algorithm~\ref{pb:explicit}.  In fact, this 
is the source of instability in the scheme. Indeed, a simple argument shows that by testing \eqref{eq:split1expli} with 
$(\v,q,\mmu) =(\uhn,\phn,\lhn)$ we get the  energy estimate
\begin{multline}\label{eq:estimacfl}
\rf\|\uhn\|^2_\OO+4\mu\sum_{m=1}^N\dt\|\eeps(\uh^m)\|^2_\OO 
+\rs\|\ddhn\|^2_\OS+ \|\dhn\|^2_{\rm s}+2\sum_{m=1}^N\dt|\ph^m|^2_{\sh}\\
\le \rf \|\u_h^0\|^2_\OO + \rs\|\ddh^0\|^2_\OS
+ \|\dh^0\|^2_{\rm s}+ \|\dh^0\|^2_{\rm s} + 2\sum_{m=1}^N\tau \as(\dh^{m}-\dh^{m-1},\ddh^m) .
\end{multline}
Note that the last term is nothing but the artificial power generated by the explicit treatment of the 
solid elastic contributions in  \eqref{eq:split1expli}.
This can be controlled, but at the expense of enforcing restrictive CFL-like stability conditions (see Remark~\ref{re:cfl} for the details).

\begin{remark}\label{re:cfl}
In order to get stability from \eqref{eq:estimacfl}, the last term can be
controlled via a Gronwall type argument. Indeed, it suffices to  use the
continuity of $\af$ and a discrete inverse inequality to obtain
\[
\aligned
&\tau \as(\dh^{m}-\dh^{m-1},\ddh^m) 
= \tau^2 \|  \ddh^m \|_{\rm s}^2
\leq \tau^2 \beta^{\rm s} \|  \ddh^m \|_{1,\Sigma}^2\\
&\qquad\leq\tau\frac{\tau C_{\rm I}\beta^{\rm s} }{(h^{\rm s})^2}\|\ddh^m\|_{0,\Sigma}^2
=\tau\rs\frac{\tau C_{\rm I} \beta^{\rm s} }{\rs (h^{\rm s})^2}\ 
\|\ddh^m\|_{0,\Sigma}^2,
\endaligned
\]
where $\beta^{\rm s},C_{\rm I}>0$ respectively denote the continuity and inverse inequality constants. Hence, the energy stability follows by inserting this estimate into \eqref{eq:estimacfl} and 
by applying the discrete Gronwall lemma (see Lemma~\ref{le:Gronwall} below), under the parabolic CFL condition 
$$
\tau  \leq \frac{\rs}{C_{\rm I} \beta^{\rm s}} (h^{\rm s})^2. 
$$
\end{remark}

In this paper, we propose to avoid the stability issues of
Algorithm~\ref{pb:explicit} by generalizing the arguments of
\cite{Fernandez2013,FL2019} to the unfitted mesh approximation provided by
Problem~\ref{pb:weakh}.  Basically, the idea consists in replacing the
displa\-cement-velocity relation of Step 2 in Algorithm~\ref{pb:explicit} by a
full call of the solid solver, using the fluid load provided by Step 1. The
resulting solution procedure is detailed in Algorithm~\ref{pb:split} below,
where the symbol $\dhnstar$ denotes an extrapolation of the solid velocity,
namely,  
\begin{equation}
\label{eq:extrapolation}
\dhnstar = \left\{
\begin{array}{ll}
\dh^{n-1} & \text{if } r=1,\\ 
\dh^{n-1}+\tau \ddh^{n-1} & \text{if } r=2.
\end{array}
\right.
\end{equation}
Note that, if $\dh$ is a smooth function of $t$, then the above
extrapolations provide an approximation error of order $r$.

\begin{algo}
\label{pb:split}
Let  $\u_{0,h}\in\V_h$, $\d_{0,h}\in\W_h$, $\d_{1,h}\in\W_h$ and
$\pphi_{0,h}\in\W_h$ be given.  We set $\uh^0=\u_{0,h}$,  $\dh^0=\d_{0,h}$ and $\ddh^0=\d_{1,h}$.
For $n=1,\dots,N$, perform the following steps:
\begin{description}
\item Step 1. Find $(\uhn,\phn,\lhn,\ddhnn)\in \V_h\times Q_h \times \Lh \times \Wh$ such that
\begin{equation}
\label{eq:split1}\left\{ 
\aligned
&\rf(\pd\uhn,\v)+b(\uhnu,\uhn,\v)+\af_h\big((\uhn,\phn),(\v,q)\big)\\
&\quad+c(\lhn,\v\circ\pphnu)-c(\mmu,\uhn\circ\pphnu-\ddhnn)=0\\
&\hspace{5cm}\quad\forall(\v,q,\mmu)\in\V_h\times Q_h\times \Lh,\\
&\frac{\rs}{\dt}(\ddhnn-\ddhnu,\w)_\Sigma=c(\lhn,\w)-\as(\dhnstar,\w)
\qquad\forall\w\in\W_h,\\
\endaligned\right. 
\end{equation}
\item Step 2. Find $(\dhn,\ddhn)\in\W_h\times \Wh$ such that
\begin{equation}
\label{eq:split2}\left\{ 
\aligned
&\rs(\pd\ddhn,\w)_\Sigma+\as(\dhn,\w)= c(\lhn,\w)
&&\quad\forall\w\in\W_h,\\
&\ddhn = \pd\dhn.
\endaligned\right. 
\end{equation}
\item Step 3. Update interface:
 $
\pphn=\boldsymbol I+\dhn.
$
\end{description}
\end{algo}

\begin{remark}
For a specific choice of the Lagrange multipliers space  $\Lh$, the unknowns 
$\lhn$ and $\ddhnn$ in step 1 of Algorithm~\ref{pb:split} 
can be eliminated in terms of $(\uhn,p^n_h)$ 
(see \cite{boilevin-fernandez-gerbeau-19a}).
\end{remark}

\section{Numerical analysis of Algorithm~\ref{pb:split}}\label{sec:analysis}

This section is devoted to the numerical analysis of the splitting schemes given by Algorithm~\ref{pb:split}. 
The energy stability properties  of the methods are analyzed in the next section, while Section~\ref{se:error} 
provides an \emph{a priori} error analysis in the case of a linearized version of Problem~\ref{pb:coupled}. 

\subsection{Energy stability analysis}
This  section is devoted to the analysis of the stability properties
of Algorithm~\ref{pb:split}. To this purpose, we first recall  some auxiliary results
which will be used later. The first one is a quite general version of
discrete Gronwall's Lemma from~\cite{HR1990}.
\begin{lem}
\label{le:Gronwall}
Let $\dt,B$ and $a_m,b_m,c_m,\gamma_m$, for integers $m\ge1$, be non negative
numbers such that, for $n\ge1$ 
\[
a_n+\dt\sum_{m=1}^{n}b_m\le
\dt\sum_{m=1}^{n}\gamma_m\,a_m +\dt\sum_{m=1}^{n}c_m+B.
\]
Suppose that $\dt\gamma_m<1$ for all $m\ge1$. Then, for $n\ge1$ it holds
\[
a_n+\dt\sum_{m=1}^{n}b_m\le
\mathrm{exp}\left(\dt\sum_{m=1}^{n}\frac{\gamma_m}{1-\tau\,\gamma_m}\right)
\left(\tau\sum_{m=1}^{n}c_m+B\right).
\]
\end{lem}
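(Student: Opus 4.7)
The plan is to proceed by induction on $n$, after first isolating the implicit contribution $\tau\gamma_n a_n$ that sits on the right-hand side of the hypothesis. Transferring the $m=n$ term of the weighted sum to the left and using the assumption $\tau\gamma_n<1$ to divide by the positive factor $1-\tau\gamma_n$, I would arrive at a recursion of the form
\[
A_n \leq \frac{1}{1-\tau\gamma_n}\Bigl(\tau\sum_{m=1}^{n-1}\gamma_m A_m + F_n\Bigr),
\]
where I set $A_n := a_n + \tau\sum_{m=1}^n b_m$ and $F_n := \tau\sum_{m=1}^n c_m + B$. The fact that $1/(1-\tau\gamma_n)\geq 1$ lets me keep the full $b_m$-sum on the left without rescaling, while the nonnegativity of $b_m$ and $c_m$ yields the bounds $a_m\leq A_m$ and $F_m\leq F_n$, so the right-hand side is legitimately controlled by the same quantity $A_m$.

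Next, I would show by induction on $n$ that $A_n \leq Q_n F_n$, with $Q_n := \prod_{k=1}^n (1-\tau\gamma_k)^{-1}$. The algebraic ingredient that makes the induction close cleanly is the telescoping identity
\[
Q_n = 1 + \tau\sum_{m=1}^n \gamma_m Q_m,
\]
which is immediate from $Q_n - Q_{n-1} = \tau\gamma_n Q_n$ together with $Q_0=1$. Substituting the inductive hypothesis $A_m\leq Q_m F_n$ for $m<n$ into the recursion and invoking this identity reduces the induction step to an equality, and the base case $n=1$ is a trivial rearrangement.

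Finally, I would convert the multiplicative bound into the stated exponential form by the elementary estimate $\log(1/(1-x)) \leq x/(1-x)$ valid for $x\in[0,1)$, which gives
\[
Q_n = \prod_{k=1}^n\frac{1}{1-\tau\gamma_k} \leq \exp\Bigl(\tau\sum_{k=1}^n \frac{\gamma_k}{1-\tau\gamma_k}\Bigr),
\]
and combining this with $A_n\leq Q_n F_n$ delivers the claim. The only delicate ingredient is really the telescoping identity in the induction step: without it, a direct comparison would give only a looser product bound that does not collapse into the stated exponential. Everything else is routine bookkeeping made safe by the structural hypothesis $\tau\gamma_n<1$.
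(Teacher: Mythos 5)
Your proof is correct: isolating the implicit term $\tau\gamma_n a_n$, dividing by $1-\tau\gamma_n$, running the induction with $Q_n=\prod_{k\le n}(1-\tau\gamma_k)^{-1}$ via the telescoping identity $Q_n=1+\tau\sum_{m\le n}\gamma_m Q_m$, and finishing with $-\log(1-x)\le x/(1-x)$ all check out, including the observation that $a_m\le A_m$ and $F_m\le F_n$ by nonnegativity. The paper itself gives no proof of this lemma --- it is quoted directly from the cited reference of Heywood and Rannacher --- and your argument is essentially the standard one found there, so there is nothing to reconcile.
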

We define a discrete counterpart $\Lsh:\W\to\W_h$ of the elastic operator $\L$ as follows:
\begin{equation}
\label{eq:defLh}
(\Lsh\w,\z)_{\Sigma}=\as(\w,\z)
\end{equation} 
for all $\z\in\W_h.$
In~\cite[Lemma~1]{Fernandez2013} the following properties of $\Lsh$ have been
proved:
\begin{lem}
\label{le:inverse}
Let $\L\w\in L^2(\Sigma)^d$, then
\begin{equation}
\label{eq:stabLh}
\|\Lsh\w\|_{\OS}\le C\|\L\w\|_{\OS}.
\end{equation}
Under the assumption that the mesh $\S_h$ is quasi-uniform, then there exists a
positive constant $C_{\rm I}$ such that for all $\w_h\in\W_h$ it holds true
\begin{equation}
\label{eq:inverse}
\|\Lsh\w_h\|_{\rm s}\le  C_{\rm I}h_{\rm s}^{-2}\|\w_h\|_{\rm s}.
\end{equation}
\end{lem}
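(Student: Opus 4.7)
The plan is to establish both inequalities directly from the Riesz-type defining identity $(\Lsh\w,\z)_\Sigma = \as(\w,\z)$ for every $\z\in\W_h$, relying only on the assumptions already made on $\as$ (symmetry, continuity, coercivity) and on standard finite element machinery.

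For the $L^2$-stability \eqref{eq:stabLh}, I would simply test the defining identity with $\z=\Lsh\w\in\W_h$. The left-hand side becomes $\|\Lsh\w\|_\OS^2$ and the right-hand side is $\as(\w,\Lsh\w)$. Since by assumption $\L\w\in L^2(\Sigma)^d$, the identity $\as(\w,\z)=(\L\w,\z)_\Sigma$ recalled in Section~\ref{se:pb} applies, so one Cauchy--Schwarz in $L^2(\Sigma)^d$ yields $\|\Lsh\w\|_\OS^2\le\|\L\w\|_\OS\|\Lsh\w\|_\OS$. Dividing by $\|\Lsh\w\|_\OS$ gives \eqref{eq:stabLh} with $C=1$.

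For the inverse-type bound \eqref{eq:inverse}, I would chain three ingredients. First, continuity of $\as$ together with the norm equivalence $\|\cdot\|_{\rm s}\simeq\|\cdot\|_{1,\Sigma}$ on $\W$ (a consequence of coercivity plus continuity) gives $\|\Lsh\w_h\|_{\rm s}^2\lesssim\|\Lsh\w_h\|_{1,\Sigma}^2$. Second, the standard inverse inequality on the quasi-uniform mesh $\S_h$, namely $\|\v_h\|_{1,\Sigma}\le C h_{\rm s}^{-1}\|\v_h\|_\OS$ for all $\v_h\in\W_h$, reduces this to $h_{\rm s}^{-2}\|\Lsh\w_h\|_\OS^2$. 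Third, testing the defining identity once more, this time with $\z=\Lsh\w_h$, and using Cauchy--Schwarz for the symmetric positive bilinear form $\as$, I would bound $\|\Lsh\w_h\|_\OS^2=\as(\w_h,\Lsh\w_h)\le\|\w_h\|_{\rm s}\|\Lsh\w_h\|_{\rm s}$. Chaining the three estimates and dividing through by $\|\Lsh\w_h\|_{\rm s}$ produces \eqref{eq:inverse}, with $C_{\rm I}$ depending only on the continuity constant of $\as$ and on the shape-regularity and quasi-uniformity of $\S_h$.

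I do not expect a genuine obstacle: the argument is essentially routine and the only care needed is bookkeeping of the norms. The factor $h_{\rm s}^{-2}$, as opposed to a single $h_{\rm s}^{-1}$, is natural because $\|\cdot\|_{\rm s}$ is equivalent to an $H^1$-norm on $\W$: one negative power of $h_{\rm s}$ arises from the inverse inequality that lowers $\|\Lsh\w_h\|_{1,\Sigma}$ down to $\|\Lsh\w_h\|_\OS$, and a second is produced when the defining identity trades this $L^2$-norm back against an $\as$-quantity on $\w_h$.
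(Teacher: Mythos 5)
Your argument is correct and is essentially the standard one: the paper does not reproduce a proof but defers to \cite[Lemma~1]{Fernandez2013}, where \eqref{eq:stabLh} follows from recognizing $\Lsh\w$ as the $L^2(\Sigma)$-projection of $\L\w$ onto $\W_h$ and \eqref{eq:inverse} from chaining continuity of $\as$, the inverse inequality on the quasi-uniform mesh, and the defining identity, exactly as you do. The only (harmless) imprecision is in your closing heuristic: both negative powers of $h_{\rm s}$ come from squaring the inverse inequality, while the final Cauchy--Schwarz step contributes none.
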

From equation~\eqref{eq:split2} we obtain the following characterization of the
intermediate value of the displacement velocity in terms of the solid velocity
and displacement:
\begin{lem}
\label{le:dn12}
Let $\{ (\uhn,\phn,\lhn,\ddhnn,\dhn,\ddhn)\}_{n\geq 1} \subset \V_h\times Q_h\times\Lh\times\W_h\times \W_h\times\W_h$ be given by 
Algorithm~\ref{pb:split}. We have 
\begin{equation}
\label{eq:dn12}
\ddhnn=\ddhn+\frac{\dt}{\rs}\Lsh(\dhn-\dhnstar).
\end{equation}
\end{lem}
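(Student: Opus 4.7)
The plan is to derive the identity by subtracting the two momentum-type equations that govern $\ddhnn$ (from Step~1) and $\ddhn$ (from Step~2), and then exploit the definition of the discrete elastic operator $\Lsh$.

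First I would write down Step~1 of Algorithm~\ref{pb:split} in the form
\[
\frac{\rs}{\tau}(\ddhnn-\ddhnu,\w)_\Sigma = c(\lhn,\w)-\as(\dhnstar,\w)\quad\forall\w\in\W_h,
\]
and Step~2 as
\[
\frac{\rs}{\tau}(\ddhn-\ddhnu,\w)_\Sigma + \as(\dhn,\w) = c(\lhn,\w)\quad\forall\w\in\W_h,
\]
where I have used $\pd\ddhn=(\ddhn-\ddhnu)/\tau$ and the fact that both equations share the same right-hand side contribution $c(\lhn,\w)$ and the same previous value $\ddhnu$.

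Next I would subtract the second from the first to eliminate both $c(\lhn,\w)$ and $\ddhnu$, obtaining
\[
\frac{\rs}{\tau}(\ddhnn-\ddhn,\w)_\Sigma = \as(\dhn-\dhnstar,\w)\quad\forall\w\in\W_h.
\]
Then, invoking the definition \eqref{eq:defLh} of the discrete elastic operator $\Lsh:\W\to\W_h$, the right-hand side rewrites as $(\Lsh(\dhn-\dhnstar),\w)_\Sigma$, yielding
\[
\bigl(\ddhnn-\ddhn-\tfrac{\tau}{\rs}\Lsh(\dhn-\dhnstar),\w\bigr)_\Sigma = 0\quad\forall\w\in\W_h.
\]

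Finally, since every factor inside the inner product belongs to $\W_h$, I would test with $\w=\ddhnn-\ddhn-\tfrac{\tau}{\rs}\Lsh(\dhn-\dhnstar)\in\W_h$ to conclude that this element vanishes, which is exactly \eqref{eq:dn12}. I do not expect any genuine obstacle here: the only subtle point is recognizing that $\dhnstar$, defined by \eqref{eq:extrapolation}, belongs to $\W_h$ for both $r=1,2$, so that $\Lsh(\dhn-\dhnstar)$ is well-defined as an element of $\W_h$ and the pointwise identification from the variational equation is legitimate.
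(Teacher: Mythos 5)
Your proposal is correct and follows essentially the same route as the paper: subtracting \eqref{eq:split2}$_1$ from \eqref{eq:split1}$_2$ to cancel $c(\lhn,\w)$ and $\ddhnu$, then identifying $\as(\dhn-\dhnstar,\cdot)$ with $(\Lsh(\dhn-\dhnstar),\cdot)_\Sigma$ via \eqref{eq:defLh} and concluding since all quantities lie in $\W_h$. The only difference is that you spell out the final testing step explicitly, which the paper leaves implicit.
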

\begin{proof} By subraction \eqref{eq:split2}$_1$ form \eqref{eq:split1}$_2$ we get 
\begin{equation}\label{eq:end-of-step-vel}
\frac{\rs}{\dt}(\ddhnn-\ddhn,\w)_\Sigma -\as(\dhn-\dhnstar,\w)= 0
\end{equation}
for all $\w\in \Wh$. The relation hence follows by from the definition of the discrete elastic
operator~\eqref{eq:defLh}.
\end{proof}

The energy estimate for Algorithm~\ref{pb:split} is given in terms of the
\emph{discrete energy} $\Ehn$ and of the \emph{discrete dissipation} $\Dhn$
defined, respectively, as
\begin{equation}
\label{eq:discren}
\aligned 
&\Eho=\rf\|\u_{0,h}\|^2_\OO+\rs\|\d_{1,h}\|^2_\OS+\|\d_{0,h}\|^2_{\rm s},\\
&\Ehn=\rf\|\uhn\|^2_\OO+\rs\|\ddhn\|^2_\OS+\|\dhn\|^2_{\rm s},\\
&\Dhn=\sum_{m=1}^n\dt\left(4\mu\|\eeps(\uh^m)\|^2_\OO+2|\ph^m|^2_{\sh}\right).
\endaligned
\end{equation}
The following theorem states that the splitting scheme is unconditionally
stable for $r=1$, while for
$r=2$ it is conditionally stable.
\begin{thm} 
\label{th:stab}
Let $\{ (\uhn,\phn,\lhn,\ddhnn,\dhn,\ddhn)\}_{n\geq 1} \subset \V_h\times Q_h\times\Lh\times\W_h\times \W_h\times\W_h$ be given by 
Algorithm~\ref{pb:split}.

\begin{itemize}
\item Scheme with $r=1$.  For $n\ge1$, we have 
\begin{equation}
\label{eq:enr1}
\Ehn+\Dhn+\dt^2\|\ddhn\|^2_{\rm s}+\frac{\dt}{2\rs}\|\Lsh\dhn\|^2_\OS
\le \Eho+\dt^2\|\d_{1,h}\|^2_{\rm s}+\frac{\dt}{2\rs}\|\Lsh\d_{0,h}\|^2_\OS.
\end{equation}
\item Scheme with $r=2$. Let $\dt$ and $h_{\rm s}$ be such that there exist $\alpha>0$ such that
\begin{equation}
\label{eq:CFL}\left\{ 
 \begin{aligned}
 &\dt \le \alpha \left(\frac{\rs}{ C_{\rm I} }\right)^\frac23 h_{\rm s}^\frac43,\\
 & 2 \tau \alpha^3 < 1, 
 \end{aligned}\right. 
\end{equation}
then, for $n\ge1$, we have 
\begin{equation}
\label{eq:enr2}
\Ehn+\Dhn\le \mathrm{exp}\left(\frac{2\gamma t_n}{1-2\dt\gamma}\right)\Eho.
\end{equation}
\end{itemize}
\end{thm}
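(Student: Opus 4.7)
The plan is to derive a per-step energy identity, sum in $n$, and invoke the discrete Gronwall lemma when an extra term appears. For each $n\ge 1$ I would first test the fluid equation \eqref{eq:split1}$_1$ with $(\v,q,\mmu)=(\uhn,\phn,\lhn)$: the trilinear form $b(\uhnu,\uhn,\uhn)$ vanishes by skew-symmetry, the $\mmu$-choice cancels $c(\lhn,\uhn\circ\pphnu)$, and the remainder is $\rf(\pd\uhn,\uhn)+2\mu\|\eeps(\uhn)\|^2+|\phn|^2_{\sh}+c(\lhn,\ddhnn)=0$. I would then test \eqref{eq:split1}$_2$ with $\w=\ddhnn\in\Wh$, which removes the multiplier and yields $\tfrac{\rs}{\dt}(\ddhnn-\ddhnu,\ddhnn)_\Sigma+\as(\dhnstar,\ddhnn)=c(\lhn,\ddhnn)$. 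Adding the two equations produces an identity that mixes the intermediate velocity $\ddhnn$ with the extrapolated displacement $\dhnstar$, and the real work is to recast it in terms of the end-of-step quantities $(\ddhn,\dhn)$ alone.

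That recasting uses Lemma~\ref{le:dn12}: substituting $\ddhnn=\ddhn+\eta$ with $\eta:=(\dt/\rs)\Lsh(\dhn-\dhnstar)$, splitting $\dhnstar=\dhn-(\dhn-\dhnstar)$, and exploiting $(\Lsh\w,\z)_\Sigma=\as(\w,\z)$ for $\z\in\Wh$, the cross-terms $\tfrac{\rs}{\dt}(\ddhn,\eta)$ and $-\as(\dhn-\dhnstar,\ddhn)=-\tfrac{\rs}{\dt}(\eta,\ddhn)$ cancel, while the polarization identity applied to $(\Lsh\dhn,\Lsh(\dhn-\dhnstar))$ collapses the remainder to
\begin{multline*}
\tfrac{\rs}{\dt}(\ddhnn-\ddhnu,\ddhnn)_\Sigma+\as(\dhnstar,\ddhnn)
=\tfrac{\rs}{2\dt}(\|\ddhn\|^2_\OS-\|\ddhnu\|^2_\OS)+\tfrac{\rs}{2\dt}\|\ddhnn-\ddhnu\|^2_\OS\\
+\as(\dhn,\ddhn)+\tfrac{\dt}{2\rs}(\|\Lsh\dhn\|^2_\OS-\|\Lsh\dhnstar\|^2_\OS).
\end{multline*}
Combined with the standard identity $\as(\dhn,\ddhn)=\tfrac{1}{2\dt}(\|\dhn\|^2_{\rm s}-\|\dhnu\|^2_{\rm s})+\tfrac{\dt}{2}\|\ddhn\|^2_{\rm s}$ and the analogous polarization for $\rf(\pd\uhn,\uhn)$, this delivers a per-step identity in which all the natural energy increments appear in telescoping form, plus a single residual of the form $\tfrac{\dt}{2\rs}(\|\Lsh\dhn\|^2_\OS-\|\Lsh\dhnstar\|^2_\OS)$.

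For $r=1$ one has $\dhnstar=\dhnu$, so the residual telescopes exactly; multiplying the identity by the appropriate power of $\dt$, summing $m=1,\dots,n$, and dropping the nonnegative squared-increment terms $\rf\|\uh^m-\uh^{m-1}\|^2$ and $\rs\|\ddh^{m-1/2}-\ddh^{m-1}\|^2$ produces \eqref{eq:enr1} without any condition on $\dt$: the extrapolation power is absorbed into the telescoping of $\|\Lsh\dh^m\|^2$. For $r=2$ we have $\dhnstar=\dhnu+\dt\ddhnu$, and the residual now splits as
\[
\|\Lsh\dh^m\|^2_\OS-\|\Lsh\dh^{m*}\|^2_\OS=\|\Lsh\dh^m\|^2_\OS-\|\Lsh\dh^{m-1}\|^2_\OS-2\dt(\Lsh\dh^{m-1},\Lsh\ddh^{m-1})_\Sigma-\dt^2\|\Lsh\ddh^{m-1}\|^2_\OS.
\]
The first difference telescopes; the last two constitute the splitting consistency error. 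I would bound them by Young's inequality and the inverse estimate \eqref{eq:inverse} (combined with the standard $L^2$--$\|\cdot\|_{\rm s}$ norm equivalence on $\Wh$, which upgrades Lemma~\ref{le:inverse} to $\|\Lsh\w_h\|^2_\OS\lesssim C_{\rm I}^2 h_{\rm s}^{-4}\|\w_h\|^2_\OS$), producing a right-hand side of the form $\dt\,\gamma_m\,E_h^{m-1}$ with $\gamma_m\sim \dt^2 C_{\rm I}^2/((\rs)^2 h_{\rm s}^4)$. The CFL condition \eqref{eq:CFL} is calibrated precisely so that $\dt\gamma_m\le\alpha^3\dt$ and $2\dt\alpha^3<1$, at which point Lemma~\ref{le:Gronwall} yields \eqref{eq:enr2}.

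The decisive obstacle is the bookkeeping in the second paragraph: aligning the mismatched test functions $\ddhnn$ vs.\ $\ddhn$ and $\dhnstar$ vs.\ $\dhn$ so that every cross-term either cancels or combines into a perfect square, leaving only the clean $\tfrac{\dt}{2\rs}(\|\Lsh\dhn\|^2-\|\Lsh\dhnstar\|^2)$ residual. Once this is in place, the dichotomy between the two cases follows automatically from whether $\dhnstar$ telescopes, and the specific $h_{\rm s}^{4/3}$ scaling in \eqref{eq:CFL} is dictated by balancing the $\dt^3$ prefactor of the consistency error against the $h_{\rm s}^{-4}$ scaling of $\|\Lsh\cdot\|^2_\OS$ on $\Wh$.
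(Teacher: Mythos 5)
Your setup and your treatment of the case $r=1$ are correct and essentially coincide with the paper's proof: same test functions, same use of Lemma~\ref{le:dn12}, and your per-step identity with the residual $\tfrac{\dt}{2\rs}\bigl(\|\Lsh\dhn\|^2_\OS-\|\Lsh\dhnstar\|^2_\OS\bigr)$ is an algebraically equivalent regrouping of the paper's $T_1+T_2$ (the paper keeps $\tfrac{\rs}{2}\|\ddhn-\ddhnu\|^2_\OS$ plus the cross term $T_1=\dt(\ddhn-\ddhnu,\Lsh(\dhn-\dhnstar))_\Sigma$, while you fold that cross term into $\tfrac{\rs}{2}\|\ddhnn-\ddhnu\|^2_\OS$). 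For $r=1$ both roads telescope and give \eqref{eq:enr1}.

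The gap is in the case $r=2$. First, the ``$L^2$--$\|\cdot\|_{\rm s}$ norm equivalence on $\W_h$'' you invoke is false uniformly in $h_{\rm s}$: $\|\cdot\|_{\rm s}$ is an $H^1$-equivalent norm, so passing to $\|\w_h\|_\OS$ costs an extra inverse factor $h_{\rm s}^{-1}$ not contained in Lemma~\ref{le:inverse}. Second, and decisively, even granting $\|\Lsh\w_h\|^2_\OS\lesssim C_{\rm I}^2h_{\rm s}^{-4}\|\w_h\|^2_\OS$, your calibration does not close: the consistency error carries a $\dt^3$ prefactor (e.g.\ $\tfrac{\dt^3}{\rs}(\Lsh\dh^{m-1},\Lsh\ddh^{m-1})_\Sigma$ at the energy scale), so after Young the Gronwall coefficient is $\dt\gamma_m$ with $\gamma_m\sim \dt^2C_{\rm I}^2/((\rs)^2h_{\rm s}^4)$, and under \eqref{eq:CFL}$_1$ this gives $\dt\gamma_m\le\alpha^3$, i.e.\ $\gamma_m=O(\dt^{-1})$, not $O(1)$; the Gronwall exponential then behaves like $\exp(\alpha^3 t_n/\dt)$ and the estimate is vacuous as $\dt\to0$. (Requiring $\gamma_m\le\alpha^3$ instead forces the parabolic condition $\dt\lesssim h_{\rm s}^2$, which is strictly stronger than the stated $h_{\rm s}^{4/3}$ CFL.) The paper avoids this by \emph{not} discarding the cross term: for $r=2$ one has $\Lsh(\dhn-\dhnstar)=\dt\Lsh(\ddhn-\ddhnu)$, so $T_1=\dt^2\|\ddhn-\ddhnu\|^2_{\rm s}\ge0$ stays on the left as an absorbing term in the $\|\cdot\|_{\rm s}$ norm, and the Young inequality applied to $T_2$ is weighted so that the $\|\ddhn-\ddhnu\|^2_{\rm s}$ part is exactly cancelled by $T_1$ while the surviving term is $\tfrac{\dt^4C_{\rm I}^2}{(\rs)^2h_{\rm s}^4}\|\dhn\|^2_{\rm s}$ --- a $\dt^4$ (not $\dt^3$) prefactor, hence $\le\dt\,\alpha^3\|\dhn\|^2_{\rm s}$ under \eqref{eq:CFL}$_1$, which is the $O(\dt)$ per-step increment that Lemma~\ref{le:Gronwall} needs. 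To repair your argument within your own decomposition you would have to retain a quantity controlling $\dt^2\|\ddh^{m-1}\|^2_{\rm s}$ on the left (e.g.\ the $\tfrac{\dt^2}{2}\|\ddh^m\|^2_{\rm s}$ terms produced by $\as(\dhn,\ddhn)$) and absorb the $\|\cdot\|_{\rm s}$-part of the consistency error there, rather than pushing everything into $L^2$ and Gronwall.
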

\begin{proof}
By taking $\v=\uhn$, $q=\phn$, $\w=\ddhnn$ and $\mmu=-\lhn$
in~\eqref{eq:split1}, and using the well known equality
$2(a-b,a)=(a^2-b^2+(a-b)^2)$, we have 
\begin{equation}
\label{eq:p1}
\aligned
&\frac{\rf}2\left(\|\uhn\|^2_\OO-\|\uhnu\|^2_\OO+\|\uhn-\uhnu\|^2_\OO\right)
+2\dt\mu\|\eeps(\uhn)\|^2_\OO+\dt|\phn|^2_{\sh}\\
&\quad
+\rs\big(\ddhnn-\ddhnu,\ddhnn\big)_\Sigma=-\dt\as\big(\dhnstar,\ddhnn\big).
\endaligned
\end{equation} 
%
On the other hand, by testing \eqref{eq:end-of-step-vel} with $\w=\ddhnn$  and by adding the
resulting equation to~\eqref{eq:p1}, we get 
\[
\aligned
&\frac{\rf}2\left(\|\uhn\|^2_\OO-\|\uhnu\|^2_\OO+\|\uhn-\uhnu\|^2_\OO\right)
+2\dt\mu\|\eeps(\uhn)\|^2_\OO+\dt|\phn|^2_{\sh}\\
&\quad+\rs\big(\ddhn-\ddhnu,\ddhnn\big)_\Sigma+\dt\as(\dhn,\ddhnn)=0.
\endaligned
\]
By introducing in the above equation the characterization of $\ddhnn$ given
in~\eqref{eq:dn12} yields 
\begin{equation}
\label{eq:p2}
\aligned
&\frac{\rf}2\left(\|\uhn\|^2_\OO-\|\uhnu\|^2_\OO+\|\uhn-\uhnu\|^2_\OO\right)
+2\dt\mu\|\eeps(\uhn)\|^2_\OO+\dt|\phn|^2_{\sh}\\
&\quad
+\frac{\rs}2\left(\|\ddhn\|^2_\OS-\|\ddhnu\|^2_\OS+\|\ddhn-\ddhnu\|^2_\OS\right)\\
&\quad+\frac12\left(\|\dhn\|^2_{\rm s}-\|\dhnu\|^2_{\rm s}+\|\dhn-\dhnu\|^2_{\rm s}\right)+T_1+T_2=0,
\endaligned
\end{equation}
with
\[
T_1:=\dt\left(\ddhn-\ddhnu,\Lsh(\dhn-\dhnstar)\right)_\Sigma\quad
T_2:=\frac{\dt^2}{\rs}\as(\dhn,\Lsh(\dhn-\dhnstar)).
\]
We estimate this terms  as in \cite[Theorem 1]{Fernandez2013}, by treating each case 
of extrapolation separately.

\paragraph{Case $r=1$.} We have $\dhnstar=\dhnu$, so that
$\Lsh(\dhn-\dhnstar)=\Lsh(\dhn-\dhnu)=\dt\Lsh\ddhn$. By using the definition of
the discrete operator $\Lsh$ we get the following relations for $T_1$ and
$T_2$:
\[
\aligned
T_1&=\dt^2\left(\ddhn-\ddhnu,\Lsh\ddhn\right)_\Sigma
=\dt^2\as\left(\ddhn-\ddhnu,\ddhn\right)_\Sigma\\
&=\frac{\dt}2\left(\|\ddhn\|^2_{\rm s}-\|\ddhnu\|^2_{\rm s}+\|\ddhn-\ddhnu\|^2_{\rm s}\right),\\
T_2&=\frac{\dt^2}{\rs}\left(\Lsh\dhn,\Lsh(\dhn-\dhnstar)\right)_\Sigma\\
&=\frac{\dt^2}{\rs}
\left(\|\Lsh\dhn\|^2_\OS-\|\Lsh\dhnu\|^2_\OS+\|\Lsh(\dhn-\dhnu)\|^2_\OS\right).
\endaligned
\]
By inserting these equalities into~\eqref{eq:p2} and summing over $n$, we
obtain~\eqref{eq:enr1}.

\paragraph{Case $r=2$.} We have $\dhnstar=\dhnu+\dt\ddhnu$, which  yields 
\[
\Lsh(\dhn-\dhnstar)=\Lsh(\dhn-\dhnu-\dt\ddhnu)=
\dt\Lsh(\ddhn-\ddhnu).
\]
Substituting the last relation in $T_1$ and $T_2$ gives
\[
\aligned
T_1&=\dt^2\left(\ddhn-\ddhnu,\Lsh(\ddhn-\ddhnu)\right)_\Sigma\\
&=\dt^2\as(\ddhn-\ddhnu,\ddhn-\ddhnu)=\dt^2\|\ddhn-\ddhnu\|^2_{\rm s},\\
T_2&=\frac{\dt^3}{\rs}\as(\dhn,\Lsh(\ddhn-\ddhnu))
=\frac{\dt^3}{\rs}\as(\Lsh\dhn,\ddhn-\ddhnu)\\
&\ge-\frac{\dt^3}{\rs}\|\Lsh\dhn\|_{\rm s}\|\ddhn-\ddhnu\|_{\rm s}\\
&\ge-\frac{\dt^4C_{\rm I}^2}{(\rs)^2h_{\rm s}^4}\|\dhn\|^2_{\rm s}-\dt^2\|\ddhn-\ddhnu\|^2_{\rm s}\\
&\ge- \tau \alpha^3\|\dhn\|^2_{\rm s}-\dt^2\|\ddhn-\ddhnu\|^2_{\rm s}.
\endaligned
\]
In the two last bounds of $T_2$,  
the inverse estimate~\eqref{eq:inverse}, the Young's inequality and \eqref{eq:CFL}$_1$ were used.
By inserting these expression into~\eqref{eq:p2} and by summing over $n$, we get
\[
\aligned
&\frac{\rf}2\|\uhn\|^2_\OO+\frac{\rs}2\|\ddhn\|^2_\OS+\frac12\|\dhn\|^2_{\rm s}
+\dt\sum_{m=1}^n\left(2\mu\|\eeps(\u_h^m)\|^2_\OO+|p_h^m|^2_{\sh}\right)\\
&\quad
\le\frac{\rf}2\|\u_{0,h}\|^2_\OO+\frac{\rs}2\|\d_{1,h}\|^2_\OS
+\frac12\|\d_{0,h}\|^2_{\rm s}
+\dt\sum_{m=1}^n\alpha^3\|\d_h^m\|^2_{\rm s}.
\endaligned
\]
Finally, the estimate~\eqref{eq:enr2} follows by applying  the discrete Gronwall's
Lemma~\ref{le:Gronwall}  with  $\gamma_m:=2\alpha^3$ and by assuming that \eqref{eq:CFL}$_2$ holds.
\end{proof}
\subsection{Error estimates for a linear model problem}
\label{se:error}
This section is devoted to the   convergence analysis of Algorithm~\ref{pb:split} by assuming that the structure undergoes infinitesimal displacements. 
We can hence identify the current configuration with the reference one. Therefore the
terms in Problem~\ref{pb:weak} and in Algorithm~\ref{pb:split} which contain the
composition of a function $v$ with the mappings $\pphi$ and $\pphi_h^{n-1}$, respectively,  will be written simply
as $v|_\Sigma$ instead of $v\circ\pphi$. Moreover, in order to simplify the
presentation, we drop out the non-linear convective term in the fluid and { we assume that the
immersed structure is represented by a closed polygonal line or surface (see Fig.~\ref{fig:domain})}.  As a
consequence the discrete spaces $\Wh$ and $\Lh$ coincide, 
\begin{equation}\label{eq:wl}
\Wh = \Lh.
\end{equation}

Since the pressure results to be discontinuous across the structure, we assume
that the solution enjoys the following regularity properties for $0<\ell<1/2$
and $0<m\le1$:
\begin{equation} 
\label{eq:regu}
\aligned
& \u\in(H^1(0,T;H^{1+\ell}(\Omega)))^d,
&&\partial_{tt}\u\in(L^2(0,T;L^2(\Omega)))^d,\\
& p\in H^1(0,T;H^\ell(\Omega)), &&\llambda\in H^1(0,T;H^{\ell-1/2}(\Sigma))^d,\\
& \d\in(H^1(0,T;H^{1+m}(\Sigma)))^d, && {  \L\d\in (L^\infty(0,T;L^2(\Sigma)))^d},\\
&\dd\in(H^1(0,T;H^{1+m}(\Sigma)))^d,
&&\partial_{tt}\dd\in(L^2(0,T;L^2(\Sigma)))^d.
\endaligned
\end{equation}
We introduce the projection operators which will be used in the proof of the
error estimates together with some approximation results.
Let $\PV:\Huo\times\Ldo\to\Vh$ and $\PQ:\Huo\times\Ldo\to\Qh$ be the
\emph{Stokes projection} operators
which to any pair $(\u,p)\in\Huo\times\Ldo$ associate the solution
$(\PV(\u,p),\PQ(\u,p))\in\Vh\times\Qh$ of the following discrete Stokes
equations
\begin{equation}
\label{eq:Stokesproj}
\af_h\big((\PV(\u,p),\PQ(\u,p)),(\v,q)\big)=\af((\u,p),(\v,q))\quad  \forall(\v,q)\in\Vh\times Q_h.
\end{equation} 
Exploiting carefully the stabilization term appearing in the Stokes equations
discretized by the stabilized $\mathbb{P}_1/\mathbb{P}_1$ elements, one can extend the standard
error estimates to the case of non smooth pressure and velocity, as follows:
\begin{equation}
\label{eq:approxStokes}
\|\u-\PV(\u,p)\|_{1,\Omega}+\|p-\PQ(\u,p)\|_{\OO}\le
Ch_{\rm f}^\ell\left(\|\u\|_{{1+\ell},\Omega}+\|p\|_{\ell,\Omega}\right).
\end{equation}
Moreover, assuming that the domain $\Omega$ is convex, by standard duality
argument one can obtain the estimate in the $L^2$-norm for the velocity, namely
\begin{equation}
\label{eq:approxStokesL2}
\|\u-\PV(\u,p)\|_{\OO}\le
Ch_{\rm f}^{1+\ell}\left(\|\u\|_{{1+\ell},\Omega}+\|p\|_{\ell,\Omega}\right).
\end{equation}
We denote by $\PW:\W\to\Wh$ the elliptic projection operator associated to the
bilinear form $\as$ as follows: for any $\d\in\W$, $\PW\d\in\Wh$ with
\begin{equation}
\label{eq:projW}
\as(\PW\d,\w_h)=\as(\d,\w_h)\quad\forall\w_h\in\Wh.
\end{equation}
Since $\as$ is assumed to be coercive on $\W$ the following approximation
estimate holds true
\begin{equation}
\label{eq:approxW}
\|\d-\PW\d\|_{\rm s}\le Ch_{\rm s}^m\|\d\|_{1+m,\Sigma}.
\end{equation}
At the end, we introduce the projection operator $\PL:\LL\to\Lh$
for the Lagrange multiplier as follows:
\begin{equation}
\label{eq:projL}
\c(\PL\llambda,\w_h)=\c(\llambda,\w_h)\quad\forall\w_h\in\Wh.
\end{equation}
We observe that we have used the same discrete space for $\Lh$ and $\Wh$ and
that for smooth functions the bilinear form $\c$ can be seen as the scalar
product in $L^2(\Sigma)$. Then we have the following approximation property.
\begin{lem}
\label{le:PL}
Assume that $\triaB$ be quasi-uniform. We have 
\begin{equation}
\label{eq:PL}
\|\llambda-\PL\llambda\|_{\LL}\le Ch_{\rm s}^\ell\|\llambda\|_{\ell-\frac12,\Sigma}
\end{equation}
for any $\llambda \in H^{\ell-\frac12}(\Sigma)^d$.
\end{lem}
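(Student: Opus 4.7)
The plan is to proceed by duality for $L^2$-type projections, combined with the approximation and stability properties of $\PL$ on the quasi-uniform mesh $\triaB$. By definition of the dual norm,
$$\|\llambda - \PL\llambda\|_{\LL} = \sup_{\w \in \Z \setminus\{0\}} \frac{c(\llambda - \PL\llambda,\w)}{\|\w\|_{\frac12,\Sigma}}.$$
The key observation, following from \eqref{eq:wl} and \eqref{eq:projL}, is that whenever the first argument of $c$ lies in $L^2(\Sigma)^d$, $\PL$ coincides with the $L^2$-orthogonal projection onto $\Wh$. Since $\Z \subset L^2(\Sigma)^d$, this allows me to apply $\PL$ also to the test function $\w$.

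Next, I insert $\PL\w \in \Wh$ as a Galerkin-type orthogonality correction. From \eqref{eq:projL} one has $c(\llambda - \PL\llambda,\PL\w) = 0$, and since $\PL\llambda \in \Wh$ is smooth enough for the pairing $c(\PL\llambda,\w - \PL\w)$ to reduce to the $L^2(\Sigma)$ inner product, the $L^2$-orthogonality of $\w - \PL\w$ to $\Wh$ makes this term vanish as well. Thus
$$c(\llambda - \PL\llambda,\w) = \langle \llambda,\w - \PL\w\rangle,$$
which I bound by duality between $H^{\ell-\frac12}(\Sigma)^d$ and $H^{\frac12-\ell}(\Sigma)^d$:
$$|\langle \llambda,\w - \PL\w\rangle| \le \|\llambda\|_{\ell-\frac12,\Sigma}\,\|\w - \PL\w\|_{\frac12-\ell,\Sigma}.$$

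The main technical ingredient is then the fractional approximation estimate
$$\|\w - \PL\w\|_{\frac12-\ell,\Sigma} \le C h_{\rm s}^{\ell}\|\w\|_{\frac12,\Sigma},$$
which I plan to obtain by space interpolation between the $L^2$ approximation $\|\w - \PL\w\|_{0,\Sigma} \le C h_{\rm s}^{1/2}\|\w\|_{\frac12,\Sigma}$ and the $H^1$-stability $\|\PL\w\|_{1,\Sigma} \le C\|\w\|_{1,\Sigma}$ of the $L^2$-projection on a quasi-uniform mesh of piecewise linears. Inserting this bound and taking the supremum over $\w$ yields \eqref{eq:PL}.

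The genuine difficulty is the $H^1$-stability of the $L^2$-projection on quasi-uniform meshes (Crouzeix--Thomée / Bramble--Pasciak--Steinbach type result), which is precisely where the quasi-uniformity assumption on $\triaB$ enters; the rest of the argument is a chain of Cauchy--Schwarz and orthogonality manipulations.
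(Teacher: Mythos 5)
Your proposal is correct and follows essentially the same route as the paper: the paper also starts from the dual-norm definition, inserts the $L^2$-projection $P_0$ onto $\Wh$ (which, as you note, coincides with $\PL$ on $L^2$ data thanks to $\Wh=\Lh$), uses the double orthogonality to reduce the numerator to $\langle\llambda,\w-P_0\w\rangle$, and then derives $\|\w-P_0\w\|_{\frac12-\ell,\Sigma}\le Ch_{\rm s}^\ell\|\w\|_{\frac12,\Sigma}$ by interpolation from the $L^2$ approximation estimate and the $H^1$-stability of the $L^2$-projection on the quasi-uniform mesh. The only cosmetic difference is that the paper passes through an intermediate $H^{1/2}$-stability bound before the final interpolation step, which is implicit in your argument.
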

\begin{proof}
In the following we shall use the $L^2$-projection $P_0$ onto $\Wh$ defined by
\[
(\w-P_0\w,\z)_\Sigma=0\qquad\forall\z\in\Wh.
\]
By definition of the norm in the space $\LL$ and using \eqref{eq:wl},
we have
\begin{equation}
\label{eq:approxL}
\aligned
\|\llambda-\PL\llambda\|_{\LL}&=
\sup_{\z\in  H^{\frac12}(\Sigma)^d }\frac{\c(\llambda-\PL\llambda,\z)}{\|\z\|_{\HhalfB}}\\
&=\sup_{\z\in H^{\frac12}(\Sigma)^d } \frac{\c(\llambda-\PL\llambda,\z-P_0\z)}{\|\z\|_{\HhalfB}}\\
&=\sup_{\z\in H^{\frac12}(\Sigma)^d}\frac{\c(\llambda,\z-P_0\z)-(\PL\llambda,\z-P_0\z)_\Sigma}
{\|\z\|_{\HhalfB}}\\
&=\sup_{\w\in H^{\frac12}(\Sigma)^d}\frac{\c(\llambda,\w-P_0\w)}{\|\w\|_{\HhalfB}}\\
&\le\sup_{\w\in H^{\frac12}(\Sigma)^d}
\frac{\|\llambda\|_{\ell-\frac12,\Sigma}\|\w-P_0\w\|_{\frac12-\ell,\Sigma}}
{\|\w\|_{\HhalfB}}.
\endaligned
\end{equation}
It remains to bound $\|\w-P_0\w\|_{H^{1/2-\ell}(\Sigma)^d}$.
Since the mesh is quasi uniform, we observe that the $L^2$-projection is stable
in $H^1(\Sigma)$, see~\cite{ABGLR} and the references quoted therein which can
weaken the requirement of a quasi-uniform mesh. 
Therefore by application of
interpolation operator theory (see for example~\cite{BrennerScott}) $P_0$ is
stable also in $H^{1/2}(\Sigma)$, 
so that there exists a constant $c_0$ such that
\[
\|P_0\w\|_{\HhalfB}\le c_0\|\w\|_{\HhalfB}.
\]
This implies the following error estimates
\[
\aligned
&\|\w-P_0\w\|_{\OS}\le Ch^{\frac12}\|\w\|_{\HhalfB}\\
&\|\w-P_0\w\|_{\HhalfB}\le (1+c_0)\|\w\|_{\HhalfB}.
\endaligned
\]

%
Applying again the interpolation operator theory, we arrive at the desired
estimate
\[
\|\w-P_0\w\|_{\frac12-\ell,\Sigma}\le Ch^\ell\|\w\|_{\HhalfB}
\]
and this inserted in~\eqref{eq:approxL} concludes the proof.
\end{proof}
The following auxiliary result provides an estimate of the error between the
time derivative and the backward finite difference approximation.
\begin{lem}
\label{le:timeder}
Let $X$ be a real Hilbert space endowed with the norm $\|\cdot\|_X$.
Then for all $\v\in H^2(0,T;X)$ we have
\begin{equation}
\label{eq:bfd1}
\dt\|\pd\v^n-\pt\v^n\|_X\le 
\dt^{\frac32}\|\partial_{tt}\v\|_{L^2(t_{n-1},t_n;X)}.
\end{equation}
Moreover, for all  $\v\in H^1(0,T;X)$ it holds true
\begin{equation}
\label{eq:bfd2}
\dt\|\pd\v^{n}\|_X\le \dt^{\frac12} \|\pt\v\|_{L^{2}(t_{n-1},t_{n};X)}.
\end{equation}
\end{lem}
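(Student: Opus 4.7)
The plan is to treat the two bounds independently, since each is a classical consistency estimate for the backward Euler difference quotient that follows from writing the discrete time derivative as an integral of $\pt\v$ (or $\partial_{tt}\v$) over $[t_{n-1},t_n]$ and then invoking the Cauchy--Schwarz inequality in the Bochner space $L^2(t_{n-1},t_n;X)$.

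For the second estimate \eqref{eq:bfd2}, I would start from the fundamental theorem of calculus in the Bochner sense, namely
\[
\pd\v^n=\frac{\v^n-\v^{n-1}}{\dt}=\frac{1}{\dt}\int_{t_{n-1}}^{t_n}\pt\v(s)\,ds,
\]
which is valid because $\v\in H^1(0,T;X)$. Taking the $X$-norm under the integral and applying Cauchy--Schwarz to $1\cdot\|\pt\v(\cdot)\|_X$ on an interval of length $\dt$ gives $\|\pd\v^n\|_X\le \dt^{-1/2}\|\pt\v\|_{L^2(t_{n-1},t_n;X)}$. Multiplying by $\dt$ yields \eqref{eq:bfd2}.

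For the first estimate \eqref{eq:bfd1}, I would use the integral form of Taylor's theorem centered at $t_n$: since $\v\in H^2(0,T;X)$,
\[
\v^{n-1}=\v^{n}-\dt\,\pt\v^{n}+\int_{t_{n-1}}^{t_{n}}(s-t_{n-1})\,\partial_{tt}\v(s)\,ds.
\]
Solving for $\v^n-\v^{n-1}-\dt\,\pt\v^n$ and dividing by $\dt$ shows that
\[
\pd\v^n-\pt\v^n=-\frac{1}{\dt}\int_{t_{n-1}}^{t_n}(s-t_{n-1})\,\partial_{tt}\v(s)\,ds.
\]
A Cauchy--Schwarz bound on this integral, together with $\int_{t_{n-1}}^{t_n}(s-t_{n-1})^2\,ds=\dt^3/3$, gives $\|\pd\v^n-\pt\v^n\|_X\le \dt^{1/2}/\sqrt{3}\,\|\partial_{tt}\v\|_{L^2(t_{n-1},t_n;X)}$, which after multiplication by $\dt$ implies \eqref{eq:bfd1} (with even a slightly sharper constant).

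There is really no obstacle here: the only small technical point is justifying the Taylor-with-remainder formula in the Bochner framework for $X$-valued $H^2$-functions, but this is standard (it follows from the scalar case applied to $\langle\v,\varphi\rangle$ for $\varphi$ in the dual, or directly from the definition of the weak derivative in $L^2(0,T;X)$). Everything else reduces to Cauchy--Schwarz on an interval of length $\dt$.
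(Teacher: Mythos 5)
Your proof is correct: the paper states Lemma~\ref{le:timeder} without proof, and your argument (fundamental theorem of calculus plus Cauchy--Schwarz for \eqref{eq:bfd2}, Taylor with integral remainder plus Cauchy--Schwarz for \eqref{eq:bfd1}) is exactly the standard derivation the authors implicitly rely on, and it even yields the slightly sharper constant $1/\sqrt{3}$ in the first bound.
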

By subtracting equations~\eqref{eq:split1} and~\eqref{eq:split2}
from~\eqref{eq:weak} we obtain the error equations.
\begin{equation}
\label{eq:eqerr}\left\{
\aligned
&\rf(\pt\un-\pd\uhn,\v)+\af\big((\un-\uhn,\pn-\phn),(v,q)\big)+\c(\ln-\lhn,\v|_\Sigma)&\\
&\quad+ \c(\mmu,(\un-\uhn)|_\Sigma-(\ddn-\ddhnn))-\sh(\phn,q)=0\\
&\hspace{6cm} \forall(\v,q,\mmu)\in\V_h\times Q_h\times \Lh,\\
&\rs(\pt\ddn-\pd\ddhn,\w)_\Sigma+\as(\dn-\dhn,\w)-\c(\ln-\lhn,\w)=0
\ \forall\w\in\W_h,\\
&\pt\dn-\pd\dhn=\ddn-\ddhn.
\endaligned\right.
\end{equation}
In order to simplify the writing, we introduce some notation.\\
Given $(\un, \pn, \dn, \ddn, \ln)\in\Huo\times\Ldo\times\W\times\W\times\LL$,
we set
\[
\unp:=\PV(\un,\pn),\ \pnp:=\PQ(\un,\pn),\ \dnp:=\PW\dn,\ \ddnp:=\PW\ddn,\
\lnp=\PL\ln
\]
and we split the errors as follows:
\begin{equation}
\label{eq:errors}
\aligned
\un-\uhn& =\tnp+\tnh,  \quad\tnp:=\un-\unp,\ \tnh:=\unp-\uhn,\\
\pn-\phn&=\phinp+\phinh,  \quad\phinp:=\pn-\pnp,\ \phinh:=\pnp-\phn,\\
\dn-\dhn&=\xinp+\xinh,  \quad\xinp:=\dn-\dnp,\ \xinh:=\dnp-\dhn,\\
\ddn-\ddhn&=\dxinp+\dxinh,  \quad\dxinp:=\ddn-\ddnp,\ \dxinh:=\ddnp-\ddhn,\\
\ln-\lhn&=\omnp+\omnh,  \quad\omnp:=\ln-\lnp,\ \omnh:=\lnp-\lhn.
\endaligned
\end{equation}
In the next lemma we provide an estimate of $\omnh$ in terms of the other
errors.
\begin{lem}
\label{le:errl}
Let us assume that $\Omega$ is convex and that the mesh $\S_h$ is
quasi-uniform. If $h_{\rm  f}/h_{\rm s}$ is sufficiently small, we have
\begin{equation}
\label{eq:errl}
\aligned
\|\omnh\|_{\LL}&\le C\big(\|\omnp\|_{\LL}+
\dt^{\frac12}\|\partial_{tt}\u\|_{L^2(t_{n-1},t_n;L^2(\Omega)^d)}\\
&+\|\pd(\tnp+\tnh)\|_{\OO}+\|\eeps(\tnh)\|_{\OO}+|\phinh|_{\sh}\big)
\endaligned
\end{equation}
\end{lem}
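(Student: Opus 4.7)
The plan is to rely on the discrete inf-sup condition between the Lagrange multiplier space $\Lh$ and the velocity space $\V_h$ for the coupling bilinear form $c(\cdot,\cdot|_\Sigma)$, which is known to hold with a constant $\beta>0$ independent of the meshsizes precisely when $h_{\rm f}/h_{\rm s}$ is sufficiently small (see, e.g., \cite{BCG15,BG2017}). This yields
\[
\beta\|\omnh\|_\LL\le \sup_{\v_h\in\V_h\setminus\{\mathbf{0}\}}\frac{c(\omnh,\v_h|_\Sigma)}{\|\v_h\|_{1,\Omega}},
\]
so the task reduces to controlling $c(\omnh,\v_h|_\Sigma)$ by the right-hand side of \eqref{eq:errl} times $\|\v_h\|_{1,\Omega}$.

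The next step is to read off an expression for $c(\omnh,\v_h|_\Sigma)$ from the first error equation of \eqref{eq:eqerr} by taking the test triple $(\v_h,0,0)$, which gives
\[
c(\omnh,\v_h|_\Sigma)=-c(\omnp,\v_h|_\Sigma)-\rf(\pt\un-\pd\uhn,\v_h)-\af\big((\un-\uhn,\pn-\phn),(\v_h,0)\big).
\]
Using the Stokes projection \eqref{eq:Stokesproj} with test $(\v_h,0)$, the identity $\sh(\pnp,0)=0$ delivers $\af((\tnp,\phinp),(\v_h,0))=0$, so the $\af$-contribution collapses to $\af((\tnh,\phinh),(\v_h,0))=2\mu(\eeps(\tnh),\eeps(\v_h))-(\div\v_h,\phinh)$.

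I would then estimate each contribution separately against $C\|\v_h\|_{1,\Omega}$ times the desired factor: duality together with the trace inequality gives $|c(\omnp,\v_h|_\Sigma)|\le C\|\omnp\|_\LL\|\v_h\|_{1,\Omega}$; the splitting $\pt\un-\pd\uhn=(\pt\un-\pd\un)+\pd(\tnp+\tnh)$ combined with Lemma \ref{le:timeder} and Poincaré's inequality turns the fluid-acceleration term into $C(\dt^{\frac12}\|\partial_{tt}\u\|_{L^2(t_{n-1},t_n;L^2(\Omega)^d)}+\|\pd(\tnp+\tnh)\|_\OO)\|\v_h\|_{1,\Omega}$; and the viscous piece is handled directly by Cauchy--Schwarz to produce $\|\eeps(\tnh)\|_\OO\|\v_h\|_{1,\Omega}$.

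The main obstacle is the pressure-divergence term $(\div\v_h,\phinh)$, which must be absorbed into $C|\phinh|_\sh\|\v_h\|_{1,\Omega}$. The natural route is an elementwise decomposition $\phinh=\bar{\phinh}^K+(\phinh-\bar{\phinh}^K)$, where $\bar{\phinh}^K$ is the local $L^2$-mean: a local Poincaré inequality controls the fluctuating part by $h_K\|\nabla\phinh\|_{0,K}$, so that after Cauchy--Schwarz in $K$ and summation one recovers exactly $|\phinh|_\sh\|\v_h\|_{1,\Omega}$ using the piecewise constancy of $\div\v_h$, while the mean part is recast as a sum of edge jumps and absorbed using the continuity of $\phinh\in\Qh$, the quasi-uniformity of $\T_h$, and the convexity of $\Omega$ (needed for the auxiliary lifting). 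This is where the Brezzi--Pitkaranta stabilization truly pays off. Together with the unfitted-mesh inf-sup---itself a delicate point that is the reason for the $h_{\rm f}/h_{\rm s}$ smallness hypothesis---dividing through by $\|\v_h\|_{1,\Omega}$ and taking the supremum yields \eqref{eq:errl}.
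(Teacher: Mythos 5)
Your overall skeleton (an inf-sup bound for $\|\omnh\|_{\LL}$, the first error equation tested with $(\v,0,0)$, the cancellation $\af\big((\tnp,\phinp),(\v_h,0)\big)=0$ coming from the Stokes projection, and term-by-term estimates) matches the paper's, and your bounds for $\c(\omnp,\v_h|_\Sigma)$, for the acceleration term and for the viscous term are correct. The proof breaks down, however, at the pressure--divergence term, and the breakdown is not repairable within the framework you set up. The bound $(\div\v_h,\phinh)\le C|\phinh|_{\sh}\|\v_h\|_{1,\Omega}$ for an \emph{arbitrary} $\v_h\in\Vh$ is false: $|\phinh|_{\sh}\sim h_{\rm f}\|\nabla\phinh\|_{\OO}$ is much weaker than $\|\phinh\|_{\OO}$ (it tends to zero as $h_{\rm f}\to 0$ for any fixed smooth error, while $(\div\v_h,\phinh)$ need not), and your own decomposition shows the problem: the fluctuating part is indeed controlled by $|\phinh|_{\sh}\|\v_h\|_{1,\Omega}$, but the elementwise means carry essentially the full $L^2$-norm of $\phinh$, and rewriting their contribution as edge jumps yields, after the scaling $|e|^{1/2}\sim h^{(d-1)/2}$ and an inverse estimate, a bound of order $h_{\rm f}^{-1}|\phinh|_{\sh}\|\v_h\|_{1,\Omega}$ — one power of $h_{\rm f}$ short. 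This is precisely the inf-sup deficiency of $\P_1/\P_1$; the Brezzi--Pitk\"aranta term does not make $(\div\v_h,\phinh)$ small against $|\phinh|_{\sh}$ for every discrete velocity.

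The paper circumvents this by not testing with an arbitrary element of $\Vh$. It starts from a \emph{continuous} inf-sup condition over the divergence-free subspace $\V_0\subset\Huo$ (not the discrete one you invoke without proof), selects $\overline\v\in\V_0$ realizing the supremum, and tests the error equation with the solution $(\overline\v_h,\overline p_h)$ of the auxiliary discrete stabilized Stokes problem \eqref{eq:auxh} with datum $\overline\v$. Taking $(\mathbf 0,\phinh)$ as test pair in \eqref{eq:auxh} and using $\div\overline\v=0$ gives exactly $(\div\overline\v_h,\phinh)=-\sh(\overline p_h,\phinh)\le |\overline p_h|_{\sh}\,|\phinh|_{\sh}\le C\|\overline\v\|_{1,\Omega}|\phinh|_{\sh}$ by the stability \eqref{eq:auxprop}; this is where the stabilization actually pays off. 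The price is the extra term $\c(\omnh,\overline\v|_\Sigma-\overline\v_h|_\Sigma)$, bounded by $C(h_{\rm f}/h_{\rm s})^{1/2}\|\omnh\|_{\LL}\|\overline\v\|_{1,\Omega}$ using the $L^2$-error estimate $\|\overline\v-\overline\v_h\|_{\OO}\le Ch_{\rm f}\|\overline\v\|_{1,\Omega}$ (this is where convexity enters) together with an inverse inequality on $\S_h$ (this is where quasi-uniformity enters); it is then absorbed into the left-hand side, which is the true role of the hypothesis that $h_{\rm f}/h_{\rm s}$ be sufficiently small. Your proposal attributes that hypothesis to a discrete inf-sup condition and the convexity to an unspecified ``auxiliary lifting,'' neither of which is substantiated, and it is missing the auxiliary Stokes construction that makes the $|\phinh|_{\sh}$ term appear.
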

\begin{proof}
In~\cite[Prop.~13]{BGarXiv} the following inf-sup condition has been proved:
there exists a positive constant $\beta$ such that 
\[
\beta\|\omnh\|_{\LL}\le\sup_{\v\in\V_0}\frac{\c(\omnh,\v|_\Sigma)}{\|\v\|_{1,\Omega}},
\]
where $\V_0$ denotes the subspace of $\Huo$ made of divergence free
functions. Therefore, there exists $\overline\v\in\V_0$ such that
\[\c(\omnh,\overline\v|_\Sigma)\ge
\beta\|\omnh\|_{\LL}\|\overline\v\|_{1,\Omega},\quad \|\overline\v\|_{1,\Omega}=\|\omnh\|_{\LL}.
\] 
%
Let $(\overline\v_h,\overline p_h)\in\Vh\times\Qh$ be the
solution of the associated discrete problem
\begin{equation}
\label{eq:auxh}
a^{\rm f}_h\big( ( \overline \v_h,\overline p_h),(\v,q)\big) =a^{\rm f}\big( (\overline\v, 0),(\v,q)\big)
 \quad\forall (\v,q) \in\Vh \times \Qh.
\end{equation}
The following bounds thus hold true, by taking into account that $\Omega$ is
convex, 
\begin{equation}
\label{eq:auxprop}
\aligned
\|\overline\v_h\|_{1,\Omega}+\|\overline p_h\|_{\OO}+|\overline p_h|_{s_h} & \le
C\|\overline\v\|_{1,\Omega},\\
\|\overline\v-\overline\v_h\|_{\OO} & \le Ch_{\rm f}\|\overline\v\|_{1,\Omega},\\
\|\overline\v_h\|_{1,\Omega}& \le\|\overline\v\|_{1,\Omega}.
\endaligned
\end{equation}
Hence, we have
\begin{equation}
\label{eq:trepez}
\aligned
\beta\|\omnh\|_{\LL}\|\overline\v\|_1&\le\c(\omnh,\overline\v|_\Sigma)
=\c(\omnh,\overline\v|_\Sigma-\overline\v_h|_\Sigma)+
\c(\omnh,\overline\v_h|_\Sigma)\\
&=\c(\omnh,\overline\v|_\Sigma-\overline\v_h|_\Sigma)+
\c(\lnp-\ln,\overline\v_h|_\Sigma)+
\c(\ln-\lhn,\overline\v_h|_\Sigma).
\endaligned
\end{equation}
We bound the three terms on the right hand side separately.
An inverse inequality, trace theorem and the error estimates above imply
\[
\c(\omnh,\overline\v|_\Sigma-\overline\v_h|_\Sigma)\le 
C\left(\frac{h_{\rm f}}{h_{\rm s}}\right)^{\frac12}\|\omnh\|_{\LL}\|\overline\v\|_{1,\Omega}.
\]
For the second term we use Lemma~\ref{le:PL} as follows
\[
\c(\lnp-\ln,\overline\v_h|_\Sigma)\le
C\|\omnp\|_{\LL}\|\overline\v\|_{1,\Omega}.
\]
We use the first equation in~\eqref{eq:eqerr}, the definition of the Stokes
projection operator~\eqref{eq:Stokesproj} and~\eqref{eq:auxh} to estimate the
last term in~\eqref{eq:trepez}, namely 
\[
\aligned
\c(\ln-\lhn,\overline\v_h|_\Sigma)&=
-\rf(\pt\un-\pd\uhn,\overline\v_h)-2\mu(\eeps(\un-\uhn),\eeps(\overline\v_h))\\
&\qquad+(\div\overline\v_h,\pn-\phn)\\
&=-\rf((\pt-\pd)\un,\overline\v_h)-\rf(\pd(\tnp+\tnh),\overline\v_h)\\
&\qquad-2\mu (\eeps(\tnh),\eeps(\overline\v_h))-\sh(\overline p_h,\phinh)\\
&\le C\|\overline\v\|_1
\big(\dt^{\frac12}\|\partial_{tt}\u\|_{L^2(t_{n-1},t_n;L^2(\Omega)^d)}\\
&\qquad+\|\pd(\tnp+\tnh)\|_{\OO}+\|\eeps(\tnh)\|_{\OO}+|\phinh|_{\sh}\big).
\endaligned
\]
Putting together the last inequalities in~\eqref{eq:trepez} and taking into
account~\eqref{eq:auxprop}, we obtain
\[
\aligned
\beta\|\omnh\|_{\LL}&\le C\left(\frac{h_{\rm f}}{h_{\rm s}}\right)^{1/2}\|\omnh\|_{\LL}
+C\Big(\|\omnp\|_{\LL}
+\dt^{1/2}\|\partial_{tt}\u\|_{L^2(t_{n-1},t_n;L^2(\Omega)^d)}\\
&\quad+\|\pd(\tnp+\tnh)\|_{\OO}+\|\eeps(\tnh)\|_{\OO}+|\phinh|_{\sh}\Big).
\endaligned
\]
Choosing $h_{\rm f}/h_{\rm s}$ sufficiently small we get~\eqref{eq:errl}, which concludes the proof. 
\end{proof}

The solid intermediate velocity $\ddhnn$ provided in Step~1 of Algorithm~\ref{pb:split} is
actually an approximation of $\ddhn$, hence we introduce the following error
\[
\cchinh=\ddnp-\ddhnn.
\]
Hence, owing to~\eqref{eq:dn12}, we have
\begin{equation}
\label{eq:chin}
\cchinh=\dxinh-\frac{\dt}{\rs}\Lsh(\dhn-\dhnstar)\\
=\dxinh+\frac{\dt}{\rs}\Lsh(\xinh-\xinstar)-\frac{\dt}{\rs}\Lsh(\dn-\dnstar).
\end{equation}
The following theorem states the main result of this section. It provides an error bound on the discrete approximation errors.
\begin{thm}
\label{th:error}
Let $(\un,\pn,\dn,\ddn,\ln)\in\Huo\times\Ldo\times\W\times\W\times\LL$ be the
solution of Problem~\ref{pb:weak} and let
$(\uhn,\phn,\ddhnn,\ln)\in\Vh\times\Qh\times\Wh\times\Lh$ and
$(\dhn,\ddhn)\in\Wh\times\Wh$ be given by  Algorithm~\ref{pb:split}, respectively. Then, if $h_{\rm f}/h_{\rm s}$ is sufficiently small,
the following bounds hold true:
\begin{itemize}
\item Scheme with $r=1$:
\begin{equation}
\label{eq:err1}
\aligned
\rf&\|\tnh\|^2_{\OO}+\rs\|\dxinh\|^2_{0,\Sigma}+\|\xinh\|^2_{\rm s}\\
&\le C\Big(\dt^2\|\partial_{tt}\u\|^2_{L^2(0,t_n;L^2(\Omega)^d)}
+\dt^2\|\partial_{tt}\dd\|^2_{L^2(0,t_n;L^2(\Sigma)^d)}\\
&+\dt^2\|\partial_{tt}\d\|_{L^2(0,t_n;H^1(\Sigma)^d)}
+\dt^5\|\partial_{tt}\dd\|^2_{L^2(0,t_n;H^1(\Sigma)^d)}
+\|\pt\boldsymbol{\theta}_\Pi\|^2_{L^2(0,t_n;L^2(\Omega)^d)}\\
&+\|\pt\dot{\boldsymbol{\xi}}_\Pi\|^2_{L^2(0,t_n;L^2(\Sigma)^d)}
+\dt\|\pt\xi_\Pi\|^2_{L^2(0,t_n;L^2(H^1(\Sigma)^d)}\\
&+\dt^3\|\pd\dot{\boldsymbol{\xi}}_\Pi\|^2_{L^2(0,t_n;L^2(H^1(\Sigma)^d)}
+\dt^2\|\pt\L\d\|^2_{L^2(0,t_n;L^2(\Sigma)^d)}\\
&+\sum_{k=1}^n\big(\dt\|\boldsymbol{\omega}^k_\Pi\|^2_{\LL}
+\dt\|\boldsymbol{\theta}^k_\Pi\|^2_{1,\Omega}
+\dt\|\dot{\boldsymbol{\xi}}^k_\Pi\|^2_{\HhalfB}
+\dt^2\|\dot{\boldsymbol{\xi}}^k_\Pi\|^2_{\rm s}\big)
\Big).
\endaligned
\end{equation}
\item Scheme with $r=2$:
let $\dt$ such that~\eqref{eq:CFL} holds true and 
\[
\frac{\dt^3}{(\rs)^2}\le1
\]
then for $n\ge1$
\begin{equation}
\label{eq:err2}
\aligned
\rf&\|\tnh\|^2_{\OO}+\rs\|\dxinh\|^2_{0,\Sigma}+\|\xinh\|^2_{\rm s}\\
&\le C\Big(\dt^2\|\partial_{tt}\u\|^2_{L^2(0,t_n;L^2(\Omega)^d)}
+\dt^2\|\partial_{tt}\dd\|^2_{L^2(0,t_n;L^2(\Sigma)^d)}\\
&+\dt^2\|\partial_{tt}\d\|_{L^2(0,t_n;H^1(\Sigma)^d)}
+\|\pt\tnp\|^2_{L^2(0,t_n;L^2(\Omega)^d)}
+\|\pt\dxinp\|^2_{L^2(0,t_n;L^2(\Sigma)^d)}\\
&+\dt^3\|\pt\dxinp\|^2_{L^2(0,t_n;H^1(\Sigma)^d)}
+\dt^5\|\pt\L\dd\|_{L^2(0,t_n;L^2(\Sigma)^d)}\\
&+\sum_{k=1}^n\big(\dt\|\boldsymbol{\omega}^k_\Pi\|^2_{\LL}
+\dt\|\boldsymbol{\theta}^k_\Pi\|^2_{1,\Omega}
+\dt\|\dot{\boldsymbol{\xi}}^k_\Pi\|^2_{\HhalfB}\big)\Big).
\endaligned
\end{equation}
\end{itemize}
\end{thm}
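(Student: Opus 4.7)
The plan is to mimic the stability argument of Theorem~\ref{th:stab} applied to the error equations~\eqref{eq:eqerr}, after decomposing each error via the projection/discrete splitting~\eqref{eq:errors}. The projection pieces $\tnp,\phinp,\xinp,\dxinp,\omnp$ are controlled by~\eqref{eq:approxStokes}--\eqref{eq:approxW} and Lemma~\ref{le:PL} and enter the right-hand side as data; the discrete pieces $\tnh,\phinh,\xinh,\dxinh,\omnh$ are estimated by the energy method. The Galerkin orthogonalities enjoyed by the Stokes projection $(\PV,\PQ)$, the elliptic projection $\PW$ and the Lagrange-multiplier projection $\PL$ eliminate projection contributions in $\af_h$, $\as$ and $c$, reducing each equation to a relation in the discrete errors, forced by $\pt\tnp$, $\pt\dxinp$, $\omnp$ and the backward-difference truncation residuals controlled by Lemma~\ref{le:timeder}.

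\textbf{Energy identity.} I test the first equation of~\eqref{eq:eqerr} with $(\v,q,\mmu)=(\tnh,\phinh,-\omnh)$ and handle the solid-intermediate mismatch exactly as in the stability proof: by combining with the equation for $\cchinh$ obtained from subtracting \eqref{eq:split2}$_1$ from \eqref{eq:split1}$_2$, tested with $\w=\cchinh$, and using the characterization~\eqref{eq:chin}. After the polarization identity $2(a-b,a)=a^2-b^2+(a-b)^2$, this yields a per-step energy identity in which the discrete norms $\rf\|\tnh\|_\OO^2$, $\rs\|\dxinh\|_\OS^2$, $\|\xinh\|_{\rm s}^2$ evolve monotonically up to the dissipation $\dt\bigl(2\mu\|\eeps(\tnh)\|_\OO^2+|\phinh|_{\sh}^2\bigr)$ and exactly the same $\Lsh$-splitting residuals $T_1,T_2$ as in~\eqref{eq:p2} (with $\xinh,\dxinh$ in place of $\dhn,\ddhn$), driven by a residual $R^n$ that collects the time-truncation defects $(\pt-\pd)\un$ and $(\pt-\pd)\ddn$, the projection time-derivative pairings $(\pt\tnp,\tnh)$ and $(\pt\dxinp,\cchinh)$, the coupling contribution $c(\omnh,\tnp|_\Sigma-\dxinp)$ produced by $\mmu=-\omnh$, and the elastic extrapolation defect $-(\dt/\rs)\Lsh(\dn-\dnstar)$ inherited from~\eqref{eq:chin}.

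\textbf{Handling the residuals.} The splitting terms $T_1,T_2$ are treated exactly as in Theorem~\ref{th:stab}: unconditionally for $r=1$, and via the inverse estimate~\eqref{eq:inverse} and the CFL~\eqref{eq:CFL} for $r=2$, leaving only a Gronwall-type contribution $\dt\alpha^3\|\xinh\|_{\rm s}^2$. The extrapolation residual $\dn-\dnstar$ is of order $\tau^r$; an integration by parts using $\pt\d=\dd$ (resp.\ $\pt\dd$) combined with Lemma~\ref{le:timeder} converts its contribution into the entries $\dt^3\|\pt\xinp\|^2_{L^2(H^1)}$, $\dt^2\|\pt\L\d\|^2_{L^2(L^2)}$ for $r=1$ and $\dt^3\|\pt\dxinp\|^2_{L^2(H^1)}$, $\dt^5\|\pt\L\dd\|^2_{L^2(L^2)}$ for $r=2$, matching the right-hand sides of~\eqref{eq:err1}--\eqref{eq:err2}. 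The delicate step is the coupling contribution $c(\omnh,\cdot)$: I bound $\|\omnh\|_\LL$ by Lemma~\ref{le:errl} and absorb the resulting $\|\eeps(\tnh)\|_\OO$, $|\phinh|_{\sh}$ and $\|\omnp\|_\LL$ pieces into the dissipation and the data, using the smallness assumption on $h_{\rm f}/h_{\rm s}$ inherited from that lemma.

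\textbf{Obstacle and conclusion.} The main technical obstacle is the $\|\pd\tnh\|_\OO$ piece produced by Lemma~\ref{le:errl}, for which the left-hand side carries no direct coercivity: the workaround is a $\tau$-weighted Young inequality so that only the increment $\|\tnh-\tnuh\|_\OO^2$ (the beneficial extra term arising from the kinetic-energy polarization) needs to be absorbed, while $\|\pd\tnp\|_\OO$ contributes the projection datum $\|\pt\tnp\|^2_{L^2(L^2)}$ via Lemma~\ref{le:timeder}. Once this bookkeeping is carried out, summing over $m=1,\dots,n$ and applying the discrete Gronwall Lemma~\ref{le:Gronwall} (with $\gamma_m=2\alpha^3$ in the $r=2$ case, under~\eqref{eq:CFL} and the auxiliary condition $\dt^3/(\rs)^2\le1$) delivers~\eqref{eq:err1} and~\eqref{eq:err2} respectively.
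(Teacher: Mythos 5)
Your proposal follows essentially the same route as the paper's proof: the projection/discrete error splitting \eqref{eq:errors} with Galerkin orthogonality reducing \eqref{eq:eqerr} to equations in the discrete errors, testing with $(\tnh,\phinh,-\omnh)$ and $\cchinh$ together with the characterization \eqref{eq:chin}, bounding $\omnh$ via Lemma~\ref{le:errl} and the truncation defects via Lemma~\ref{le:timeder}, splitting the elastic extrapolation residual into a discrete part (handled as in Theorem~\ref{th:stab}, unconditionally for $r=1$ and under \eqref{eq:CFL} for $r=2$) plus a consistency part of order $\tau^r$ yielding the $\|\pt\L\d\|$ and $\|\pt\L\dd\|$ contributions, and concluding with Young's inequality and the discrete Gronwall Lemma~\ref{le:Gronwall}. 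The delicate absorption of the $\|\pd\tnh\|_{\OO}$ term coming from Lemma~\ref{le:errl} into the increment $\|\tnh-\tnuh\|^2_{\OO}$ is exactly the mechanism the paper relies on as well, so the proposal is consistent with the published argument.
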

\begin{proof}
By using the notation introduced in~\eqref{eq:errors} and recalling the
definitions of the projection operators introduced in~\eqref{eq:Stokesproj},
\eqref{eq:projW} and~\eqref{eq:projL}, the error equation \eqref{eq:eqerr} yields 
\begin{equation}
\label{eq:eqerrmod}\left\{ 
\aligned
&\rf(\pd\tnh,\v)+2\mu(\eeps(\tnh),\eeps(\v)-(\div\v,\phinh)+\c(\omnh,\v|_\Sigma)\\
&\quad=-\rf(\pt\un-\pd\un,\v)-\rf(\pd\tnp,\v)-\c(\omnp,\v|_\Sigma)
&&\ \forall\v\in\V_h,\\
&(\div(\tnh,q)+\sh(\phinh,q)=0&&\ \forall q\in Q_h,\\
&\c\big(\mmu,\tnh|_\Sigma-\cchinh\big)
=-\c\big(\mmu,\tnp|_\Sigma-\dxinp\big)
&&\ \forall\mmu\in\Lh,\\
&\rs(\pd\dxinh,\w)_\Sigma+\as(\xinh,\w)-\c(\omnh,\w)\\
&\qquad =-\rs(\pt\ddn-\pd\ddn,\w)_\Sigma-\rs(\pd\dxinp,\w)_\Sigma
&&\ \forall\w\in\W_h,\\
&\pd\xinh=\dxinh-\ddnp+\pd\dnp.
\endaligned\right. 
\end{equation}
We take $\v=\dt\tnh$, $q=\dt\phinh$, $\w=\dt\cchinh$, $\mmu=-\dt\omnh$ and sum the resulting expressions, so that we have
\[
\aligned
&\rf(\tnh-\tnuh,\tnh)+\dt2\mu(\eeps(\tnh),\eeps(\tnh))+\dt\sh(\phinh,\phinh)\\
&\qquad+\rs(\dxinh-\dxinuh,\cchinh)_\Sigma+\dt\as(\xinh,\cchinh)\\
&=-\dt\rf(\pt\un-\pd\un,\tnh)-\dt\rf(\pd\tnp,\tnh)-\dt\c(\omnp,\tnh|_\Sigma)\\
&\qquad-\dt\rs(\pt\ddn-\pd\ddn,\cchinh)_\Sigma
-\dt\rs(\pd\dxinp,\cchinh)_\Sigma
+\dt\c(\omnh,\tnp|_\Sigma-\dxinp).
\endaligned
\]
We observe that using~\eqref{eq:chin}, last equation in~\eqref{eq:eqerrmod}
and~\eqref{eq:projW}, we have 
\[
\as(\xinh,\cchinh)=\as(\xinh,\pd\xinh)+\as(\xinh,\pt\dn-\pd\dn).
\]
Using the well-known identity $(a-b)a=\frac12(a^2-b^2+(a-b)^2)$
and~\eqref{eq:chin}, we get
\begin{equation}
\label{eq:via}
\aligned
&\frac{\rf}2\big(\|\tnh\|^2_{\OO}-\|\tnuh\|^2_{\OO}+\|\tnh-\tnuh\|^2_{\OO}\big)
+2\dt\mu\|\eeps(\tnh)\|^2_{0,\Omega}+\dt|\phinh|_{\sh}\\
&\quad+\frac{\rs}2\big(\|\dxinh\|^2_{\OS}-\|\dxinuh\|^2_{\OS}
+\|\dxinh-\dxinuh\|^2_{\OS}\big)\\
&\quad+\frac12\big(\|\xinh\|^2_{\rm s}-\|\xinuh\|^2_{\rm s}
+\|\xinh-\xinuh\|^2_{\rm s}\big)
=\sum_{i=1}^8T_i,
\endaligned
\end{equation}
with the notations 
\begin{equation}
\label{eq:defterms}
\aligned
&T_1:=-\dt\rf(\pt\un-\pd\un,\tnh)-\dt\rf(\pd\tnp,\tnh),\\
&T_2:=-\dt\rs(\pt\ddn-\pd\ddn,\dxinh)_\Sigma-\dt\rs(\pd\dxinp,\dxinh)_\Sigma,\\
&T_3:=-\dt\as(\xinh,\pt\dn-\pd\dn),\\
&T_4:=-\dt\c(\omnp,\tnh|_\Sigma),\\
&T_5:=\dt\c(\omnh,\tnp|_\Sigma-\dxinp),\\
&T_6:=\frac{\dt^2}{\rs}\as\big(\xinh,\Lsh(\dhn-\dhnstar)\big),\\
&T_7:=\dt\big(\dxinh-\dxinuh,\Lsh(\dhn-\dhnstar)\big)_\Sigma,\\
&T_8:=\dt^2\big(\pt\ddn-\pd\ddn,\Lsh(\dhn-\dhnstar)\big)_\Sigma
+\dt^2\big(\pd\dxinp,\Lsh(\dhn-\dhnstar)\big)_\Sigma.
\endaligned
\end{equation}
We estimate the first 5 terms which do not depend on $\dhnstar$ using
Lemmas~\ref{le:timeder} and~\ref{le:errl}, which yields 
\begin{equation}
\label{eq:5T}
\aligned
&T_1\le C\rf\Big(\dt^{3/2}\|\partial_{tt}\u\|_{L^2(t_{n-1},t_n;L^2(\Omega)^d)}
+\dt^{1/2}\|\pt\tnp\|_{L^2(t_{n-1},t_n;L^2(\Omega)^d)}\Big)\|\tnh\|_{\OO},
\\
&T_2\le C\rs\Big(\dt^{3/2}\|\partial_{tt}\dd\|_{L^2(t_{n-1},t_n;L^2(\Sigma)^d)}
+\dt^{1/2}\|\pt\dxinp\|_{L^2(t_{n-1},t_n;L^2(\Sigma)^d)}\Big)\|\dxinh\|_{\OS},
\\
&T_3\le
C\dt^{3/2}\|\partial_{tt}\d\|_{L^2(t_{n-1},t_n;H^1(\Sigma)^d)}\|\xinh\|_{\rm s},
\\
&T_4\le C\dt\|\omnp\|_{\LL}\|\tnh\|_{1,\Omega},
\\
&T_5\le C\dt\|\omnh\|_{\LL}\Big(\|\tnp\|_{1,\Omega}+\|\dxinp\|_{\HhalfB}\Big).
\endaligned
\end{equation}
Using Young's inequality in~\eqref{eq:5T} and the Korn inequality           
$K\|\v\|_{1,\Omega}\le\|\eeps(\v)\|_{\OO}$ for all $\v\in\Huo$,
and adding the resulting inequalities to~\eqref{eq:via} we have
\begin{equation}
\label{eq:firststep}
\aligned
&\frac{\rf}2\big(\|\tnh\|^2_{\OO}-\|\tnuh\|^2_{\OO}+\|\tnh-\tnuh\|^2_{\OO}\big)
+2\dt\mu K^2\|\tnh\|^2_{1,\Omega}+\dt|\phinh|_{\sh}\\
&\quad+\frac{\rs}2\big(\|\dxinh\|^2_{\OS}-\|\dxinuh\|^2_{\OS}
+\|\dxinh-\dxinuh\|^2_{\OS}\big)\\
&\quad+\frac12\big(\|\xinh\|^2_{\rm s}-\|\xinuh\|^2_{\rm s}+\|\xinh-\xinuh\|^2_{\rm s}\big)\\
&\le\frac{\dt\delta_1}2\Big(\rf\|\tnh\|^2_{\OO}
+\rs\|\dxinh\|^2_{\OS}+\|\xinh\|^2_{\rm s}+\|\tnh\|^2_{1,\Omega}+|\phinh|^2_{\sh}\Big)\\
&\quad+C\Big(\dt^2\|\partial_{tt}\u\|^2_{L^2(t_{n-1},t_n;L^2(\Omega)^d)}
+\dt^2\|\partial_{tt}\dd\|^2_{L^2(t_{n-1},t_n;L^2(\Sigma)^d)}\\
&\quad +\dt^2\|\partial_{tt}\d\|_{L^2(t_{n-1},t_n;H^1(\Sigma)^d)}
+\|\pt\tnp\|^2_{L^2(t_{n-1},t_n;L^2(\Omega)^d)}\\
&\quad+\|\pt\dxinp\|^2_{L^2(t_{n-1},t_n;L^2(\Sigma)^d)}
+\dt\|\omnp\|^2_{\LL}+\dt\|\tnp\|^2_{1,\Omega}+\dt\|\dxinp\|^2_{\HhalfB}
\Big)+\sum_{i=6}^8T_i.
\endaligned
\end{equation}
For the remaining three terms $T_i$ for $i=6,7,8$ we have to take into account
the definition of $\dhnstar$.

\paragraph{Case $r=1$.}
We estimate the term $T_6$ by noting that $\dhn=\dnp-\xinh$
and using \eqref{eq:defLh}. We have, 
\begin{equation}
\label{eq:T61}
\aligned
T_6&=\frac{\dt^2}{\rs}\as\big(\xinh,\Lsh(\dhn-\dhnu)\big)\\
&=-\frac{\dt^2}{\rs}\as\big(\xinh,\Lsh(\xinh-\xinuh)\big)
+\frac{\dt^2}{\rs}\as\big(\xinh,\Lsh(\dnp-\d^{n-1}_\Pi)\big)\\
&=-\frac{\dt^2}{\rs}\big(\Lsh\xinh,\Lsh(\xinh-\xinuh)\big)_\Sigma
+\frac{\dt^2}{\rs}\big(\Lsh\xinh,\Lsh(\dnp-\d^{n-1}_\Pi)\big)_\Sigma\\
&\le-\frac12\frac{\dt^2}{\rs}\big(\|\Lsh\xinh\|^2_{\OS}
-\|\Lsh\xinuh\|^2_{\OS}+\|\Lsh(\xinh-\xinuh)\|^2_{\OS}\big)\\
&\quad+\frac{\dt^2}{\rs}\|\Lsh\xinh\|_{\OS}\|\Lsh(\dn-\d^{n-1})\|_{\OS}.
\endaligned
\end{equation}

The last equation in~\eqref{eq:eqerrmod} implies that
$\xinh-\xinuh=\dt\dxinh+\dt\dxinp-\dt(\pt\dn-\pd\dn)-\dt\pd\xinp$,
which inserted in $T_7$ gives
\begin{equation}
\label{eq:T71}
\aligned
T_7&=\dt\big(\dxinh-\dxinuh,\Lsh(\dhn-\dhnu)\big)_\Sigma\\
&=-\dt\big(\dxinh-\dxinuh,\Lsh(\xinh-\xinuh)\big)_\Sigma
+\dt\big(\dxinh-\dxinuh,\Lsh(\dnp-\d^{n-1}_\Pi)\big)_\Sigma\\
&=-\dt\as(\dxinh-\dxinuh,\xinh-\xinuh)
+\dt\big(\dxinh-\dxinuh,\Lsh(\dnp-\d^{n-1}_\Pi)\big)_\Sigma\\
&=-\dt^2\as(\dxinh-\dxinuh,\dxinh)-\dt^2\as(\dxinh-\dxinuh,\dxinp)\\
&\quad
+\dt^2\as(\dxinh-\dxinuh,\pt\dn-\pd\dn)+\dt^2\as(\dxinh-\dxinuh,\pd\xinp)\\
&\quad+\dt\big(\dxinh-\dxinuh,\Lsh(\dnp-\d^{n-1}_\Pi)\big)_\Sigma\\
&=-\frac{\dt^2}2\big(\|\dxinh\|^2_{\rm s}-\|\dxinuh\|^2_{\rm s}
+\|\dxinh-\dxinuh\|^2_{\rm s}\big)\\
&\quad+\dt^2\|\dxinh-\dxinuh\|_{\rm s}\Big(\|\dxinp\|_{\rm s}
+\dt^{1/2}\|\partial_{tt}\d\|_{L^2(t_{n-1},t_n;H^1(\Sigma)^d)}
+\|\pd\xinp\|_{\rm s}\Big)\\
&\quad+\dt\|\dxinh-\dxinuh\|_{\OS}\|\Lsh(\dn-\d^{n-1})\|_{\OS}.
\endaligned
\end{equation}
The last term can be easily bounded as follows
\begin{equation}
\label{eq:T81}
\aligned
T_8&=\dt^2\big(\pt\ddn-\pd\ddn,\Lsh(\dhn-\dhnu)\big)_\Sigma
+\dt^2\big(\pd\dxinp,\Lsh(\dhn-\dhnu)\big)_\Sigma\\
&=-\dt^2\as(\pt\ddn-\pd\ddn,\xinh-\xinuh)
-\dt^2\as(\pd\dxinp,\xinh-\xinuh)\\
&\quad+\dt^2\big(\pt\ddn-\pd\ddn,\Lsh(\dnp-\d^{n-1}_\Pi)\big)_\Sigma
+\dt^2\big(\pd\dxinp,\Lsh(\dnp-\d^{n-1}_\Pi)\big)_\Sigma\\
&\le\dt^2\|\xinh-\xinuh\|^2_{\rm s}\big(\|\pt\ddn-\pd\ddn\|_{\rm s}+\|\pd\dxinp\|_{\rm s}\big)\\
&\quad+C\|\Lsh(\dn-\d^{n-1})\|_{\OS}
\big(\|\pt\ddn-\pd\ddn\|_{\OS}+\|\pd\dxinp\|_{\OS}\big).
\endaligned
\end{equation}
We estimate $\|\Lsh(\dn-\d^{n-1})\|_{\OS}$ on the right hand side
of~\eqref{eq:T61}-\eqref{eq:T81} using~\eqref{eq:stabLh} and~\eqref{eq:bfd2}
as follows
\begin{equation}
\label{eq:Lhdn}
\aligned
\|\Lsh(\dn-\d^{n-1})\|_{\OS}&\le C\|\L(\dn-\d^{n-1})\|_{\OS}\le C\dt^{1/2}\|\pt\L\d\|_{L^2(t_{n-1},t_n;L^2(\Sigma)^d)}.
\endaligned
\end{equation} 
We use Young's inequality in~\eqref{eq:T61}-\eqref{eq:T81}  and insert the
resulting relations into~\eqref{eq:firststep},  which yields 
\[
\aligned
&\frac{\rf}2\big(\|\tnh\|^2_{\OO}-\|\tnuh\|^2_{\OO}+\|\tnh-\tnuh\|^2_{\OO}\big)
+2\dt\mu K^2\|\tnh\|^2_{1,\Omega}+\dt|\phinh|_{\sh}\\
&\quad+\frac{\rs}2\big(\|\dxinh\|^2_{\OS}-\|\dxinuh\|^2_{\OS}
+\|\dxinh-\dxinuh\|^2_{\OS}\big)\\
&\quad+\frac12\big(\|\xinh\|^2_{\rm s}-\|\xinuh\|^2_{\rm s}+\|\xinh-\xinuh\|^2_{\rm s}\big)\\
&\quad+\frac12\frac{\dt^2}{\rs}\big(\|\Lsh\xinh\|^2_{\OS}
-\|\Lsh\xinuh\|^2_{\OS}+\|\Lsh(\xinh-\xinuh)\|^2_{\OS}\big)\\
&\quad+\frac{\dt^2}2\big(\|\dxinh\|^2_{\rm s}-\|\dxinuh\|^2_{\rm s}
+\|\dxinh-\dxinuh\|^2_{\rm s}\big)\\
&\le\frac{\dt\delta_1}2\Big(\rf\|\tnh\|^2_{\OO}
+\rs\|\dxinh\|^2_{\OS}+\|\xinh\|^2_{\rm s}+\|\tnh\|^2_{1,\Omega}+|\phinh|^2_{\sh}
+\frac{\dt^2}{\rs}\|\Lsh\xinh\|^2_{\OS}\Big)\\
&\quad+\frac{\delta_1}2\Big(\dt^2\|\dxinh-\dxinuh\|^2_{\rm s}
+\rs\|\dxinh-\dxinuh\|^2_{\OS}+\|\xinh-\xinuh\|^2_{\rm s}\Big)\\
&\quad+C\Big(\dt^2\|\partial_{tt}\u\|^2_{L^2(t_{n-1},t_n;L^2(\Omega)^d)}
+\dt^2\|\partial_{tt}\dd\|^2_{L^2(t_{n-1},t_n;L^2(\Sigma)^d)}\\
&\quad+\dt^2\|\partial_{tt}\d\|_{L^2(t_{n-1},t_n;H^1(\Sigma)^d)}
+\dt^5\|\partial_{tt}\dd\|^2_{L^2(t_{n-1},t_n;H^1(\Sigma)^d)}\\
&\quad+\|\pt\boldsymbol{\theta}_\Pi\|^2_{L^2(t_{n-1},t_n;L^2(\Omega)^d)}
+\|\pt\dot{\boldsymbol{\xi}}_\Pi\|^2_{L^2(t_{n-1},t_n;L^2(\Sigma)^d)}\\
&\quad+\dt\|\pt\xi_\Pi\|^2_{L^2(t_{n-1},t_n;L^2(H^1(\Sigma)^d)}
+\dt^3\|\pd\dot{\boldsymbol{\xi}}_\Pi\|^2_{L^2(t_{n-1},t_n;L^2(H^1(\Sigma)^d)}
+\dt\|\omnp\|^2_{\LL}\\
&\quad+\dt\|\tnp\|^2_{1,\Omega}+\dt\|\dxinp\|^2_{\HhalfB}
+\dt^2\|\pt\L\d\|^2_{L^2(t_{n-1},t_n;L^2(\Sigma)^d)}
+\dt^2\|\dxinp\|^2_{\rm s}
\Big).
\endaligned
\]
The error estimate~\eqref{eq:err1} follows by choosing $\delta_1=1/2$, so that
the terms in the second bracket on the right hand side can be absorbded into
the left hand side, then we sum over $n$ and apply Lemma~\ref{le:Gronwall}.

\paragraph{Case $r=2$.}
Since $\pd\dhn=\ddhn$, we have that 
\[
\aligned
\dhn-\dhnstar&=\dhn-\dhnu-\dt\ddhnu=\dt(\pd\dhn-\ddhn)+\dt(\ddhn-\ddhnu)\\
&=\dt(\ddnp-\ddnup)-\dt(\dxinh-\dxinuh)
\endaligned
\]
As done for the first two schemes we analyze the three terms $T_i$ with
$i=6,7,8$.
\begin{equation}
\label{eq:T62}
\aligned
T_6=&\frac{\dt^3}{\rs}\as(\xinh,\ddnp-\ddnup)
-\frac{\dt^3}{\rs}\as(\xinh,\dxinh-\dxinuh)\\
&\le \frac{\dt^3}{\rs}\|\xinh\|_{\rm s}\Big(\|\ddnp-\ddnup\|_{\rm s}
+\|\dxinh-\dxinuh\|_{\rm s}\Big)
\endaligned
\end{equation}
Taking into account the definition~\eqref{eq:defLh}, we can write $T_7$ and
$T_8$ as follows
\begin{equation}
\label{eq:T72}
\aligned
T_7&=\dt^2(\dxinh-\dxinuh,\Lsh(\ddnp-\ddnup))_\Sigma
-\dt^2\as(\dxinh-\dxinuh,\dxinh-\dxinuh)\\
&\le\dt^2\|\dxinh-\dxinuh\|_{\OS}\|\Lsh(\ddnp-\ddnup)\|_{\OS}
-\dt^2\|\dxinh-\dxinuh\|^2_{\OS},
\endaligned
\end{equation}
\begin{equation}
\label{eq:T82}
\aligned
T_8&=\dt^3\big(\pt\ddn-\pd\ddn,\Lsh(\ddnp-\ddnup)\big)_\Sigma
+\dt^3\big(\pd\dxinp,\Lsh(\ddnp-\ddnup)\big)_\Sigma\\
&\quad-\dt^3\as(\pt\ddn-\pd\ddn,\dxinh-\dxinuh)
-\dt^3\as(\pd\dxinp,\dxinh-\dxinuh)\\
&\le\dt^3\big(\|\pt\ddn-\pd\ddn\|_{\OS}+\|\pd\dxinp\|_{\OS}\big)
\|\Lsh(\ddnp-\ddnup)\|_{\OS}\\
&\quad+\dt^3\big(\|\pt\ddn-\pd\ddn\|_{\rm s}+\|\pd\dxinp\|_{\rm s}\big)\|\dxinh-\dxinuh\|_{\rm s}.
\endaligned
\end{equation}
Adding~\eqref{eq:T62}-\eqref{eq:T82} to~\eqref{eq:firststep} and using Young's
inequality, we obtain
\[
\aligned
&\frac{\rf}2\big(\|\tnh\|^2_{\OO}-\|\tnuh\|^2_{\OO}+\|\tnh-\tnuh\|^2_{\OO}\big)
+2\dt\mu K^2\|\tnh\|^2_{1,\Omega}+\dt|\phinh|_{\sh}\\
&\quad+\frac{\rs}2\big(\|\dxinh\|^2_{\OS}-\|\dxinuh\|^2_{\OS}
+\|\dxinh-\dxinuh\|^2_{\OS}\big)\\
&\quad+\frac12\big(\|\xinh\|^2_{\rm s}-\|\xinuh\|^2_{\rm s}+\|\xinh-\xinuh\|^2_{\rm s}\big)
+\frac{\dt^2}2\|\dxinh-\dxinuh\|^2_{\rm s}\\
&\le\frac{\dt\delta_1}2\Big(\rf\|\tnh\|^2_{\OO}
+\rs\|\dxinh\|^2_{\OS}+\|\xinh\|^2_{\rm s}+\|\tnh\|^2_{1,\Omega}+|\phinh|^2_{\sh}\Big)\\
&\quad+\frac{\dt^5}{\delta_1(\rs)^2}\|\dxinh-\dxinuh\|^2_{\rm s}
+\frac{\delta_1}2\rs\|\dxinh-\dxinuh\|^2_{\OS}
\\
&\quad+C\Big(\dt^2\|\partial_{tt}\u\|^2_{L^2(t_{n-1},t_n;L^2(\Omega)^d)}
+\dt^2\|\partial_{tt}\dd\|^2_{L^2(t_{n-1},t_n;L^2(\Sigma)^d)}\\
&\quad+\dt^2\|\partial_{tt}\d\|_{L^2(t_{n-1},t_n;H^1(\Sigma)^d)}
+\|\pt\tnp\|^2_{L^2(t_{n-1},t_n;L^2(\Omega)^d)}\\
&\quad+\|\pt\dxinp\|^2_{L^2(t_{n-1},t_n;L^2(\Sigma)^d)}
+\dt^3\|\pt\dxinp\|^2_{L^2(t_{n-1},t_n;H^1(\Sigma)^d)}\\
&\quad+\dt\|\omnp\|^2_{\LL}+\dt\|\tnp\|^2_{1,\Omega}+\dt\|\dxinp\|^2_{\HhalfB}\\
&\quad+\dt^5\|\ddn-\dd^{n-1}\|^2_{\rm s}+\dt^4\|\Lsh(\ddn-\dd^{n-1})\|^2_{\OS}
\Big)
\endaligned
\]
For the last two terms in the above inequality, we use again~\eqref{eq:bfd2},
so that we have
\[
\aligned
&\|\ddn-\dd^{n-1}\|_{\rm s}\le \dt^{1/2}\|\pt\dd\|_{L^2(t_{n-1},t_n;H^1(\Sigma)^d)}
\le\dt^{1/2}\|\partial_{tt}\d\|_{L^2(t_{n-1},t_n;H^1(\Sigma)^d)},\\
&\|\Lsh(\ddn-\dd^{n-1})\|_{\OS}\le C\|\L(\ddn-\dd^{n-1})\|_{\OS}
\le C\dt^{1/2}\|\pt\L\dd\|_{L^2(t_{n-1},t_n;L^2(\Sigma)^d)}.
\endaligned
\]
We choose $\delta_1=1/2$ and $\dt$ such that $\frac{\dt^3}{(\rs)^2}\le1$, then
the application of Lemma~\ref{le:Gronwall} yields the estimate~\eqref{eq:err2}.
\end{proof}

\section{Numerical experiments}\label{sect:exp}

In this section, we perform numerical tests to check numerically the
performances of the schemes reported in  Algorithm~\ref{pb:split}. In particular, we shall
consider stability and convergence, and compare the behavior of the proposed splitting 
schemes with that of the monolithic one.
All the numerical tests are performed using the classical 2D benchmark problem of
an ellipsoidal structure that evolves to a circular equilibrium position.
The fluid domain $\Omega$ is the square $[0,1]^2$ and the initial position of
the structure is an ellipse centered at $(0.5,0.5)$ with the following initial
configuration 
\[
\X_0(s)= 
\begin{pmatrix}
0.5+0.25 \sqrt{2}\cos s\\
\displaystyle 0.5+\frac {0.25}{\sqrt{2}}\sin s
\end{pmatrix},
\quad s\in[0,2\pi].
\]
We used the following physical parameters $\rho^{\rm f} =\rs=1$, $\mu=1$.
Moreover, we assume that the structure is an elastic string with stiffness
$\kappa=2$. 

\subsection{Stability}
The purpose of this paragraph is to illustrate the stability results of Theorem~\ref{th:stab}. 
Unconditional stability is obtained for Algorithm~\ref{pb:split} with $r=1$  and conditional 
stability with $r=2$. We compare the results with those obtained with the strongly coupled scheme,  Algorithm \ref{pb:fullydiscr}, 
for which unconditional stability has been established in~\cite{BCG15}. 

The model problem consists in the evolution of an ellipsoidal structure toward
a circular equilibrium position. The only force that drives the motion is the
elastic reaction force to the initial deformation, hence we expect that the
energy of the system decreases to a plateau value. In order to check the stability properties of the schemes, we performed long
term simulations decreasing the time step while keeping fixed fluid and solid
meshes. The fluid mesh is made of  $40\times40$ square elements subdivided into two triangles, whereas  the reference configuration of the structure is divided into
$40$ subintervals.

Figure~\ref{fig:stab} resports the time evolution of the total energy 
of the fluid-structure system, namely,  
\[
\mathbf{E}_h^n:=\rf\|\u_h^n\|^2_{0,\Omega}
+\rs\|\dd_h^n\|^2_{0,\Sigma}+\|\d_h^n\|^2_{\rm s}.
\]
 The results of the tests
are in agreement with the theoretical analysis. We can appreciate energy
decreasing for all the used time steps for Algorithm \ref{pb:fullydiscr} (a) and for the
 Algorithm~\ref{pb:split} with $r=1$  (b), whereas we see instability
for  Algorithm~\ref{pb:split} with $r=2$ (c) when the time step is not
sufficiently small.

\begin{figure}[h!]
    \centering
    \subfigure[Algorithm \ref{pb:fullydiscr}.]{\includegraphics[scale=0.2]{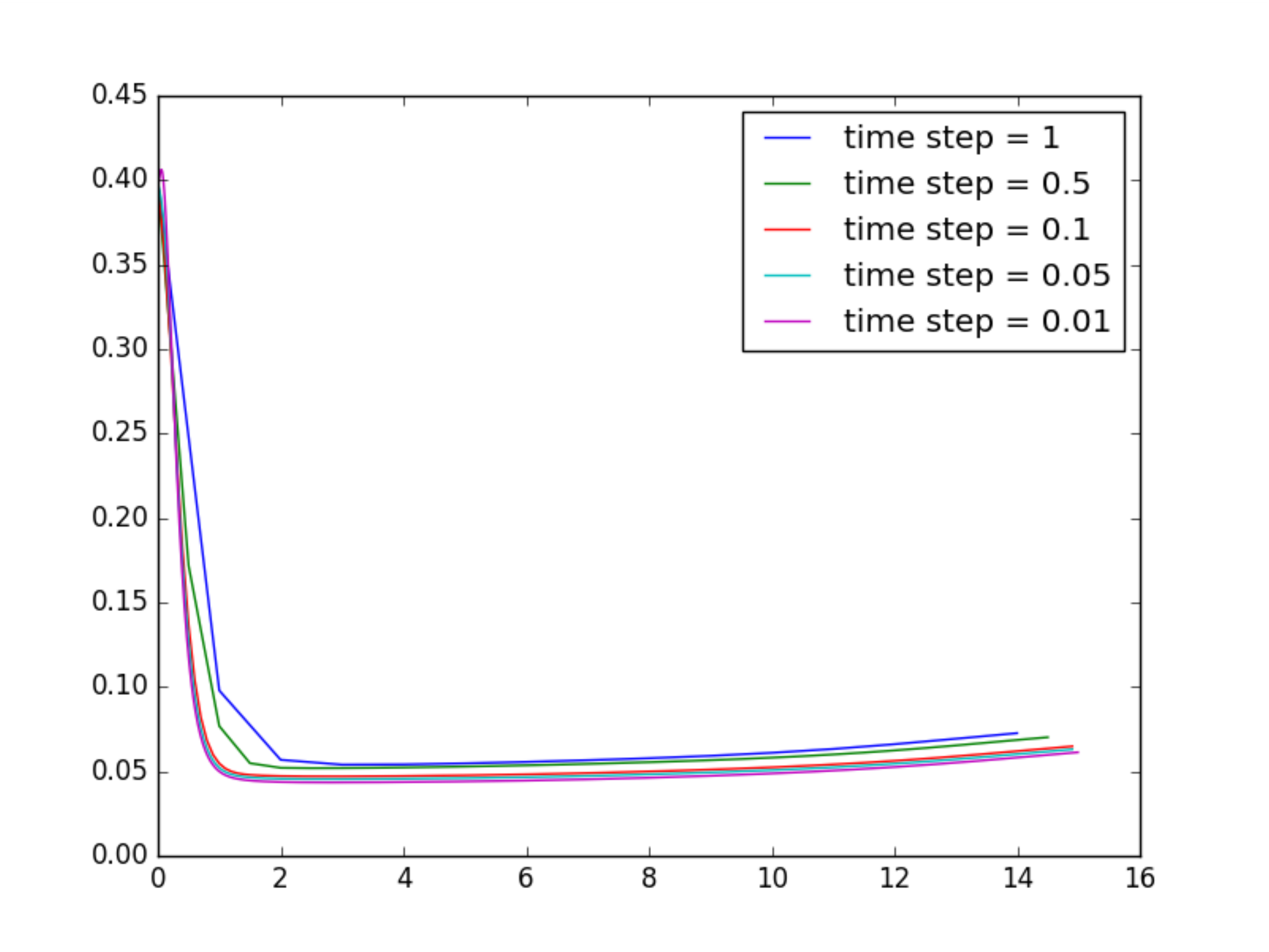}}       
    \subfigure[Algorithm~\ref{pb:split} with $r=1$.]{\includegraphics[scale=0.2]{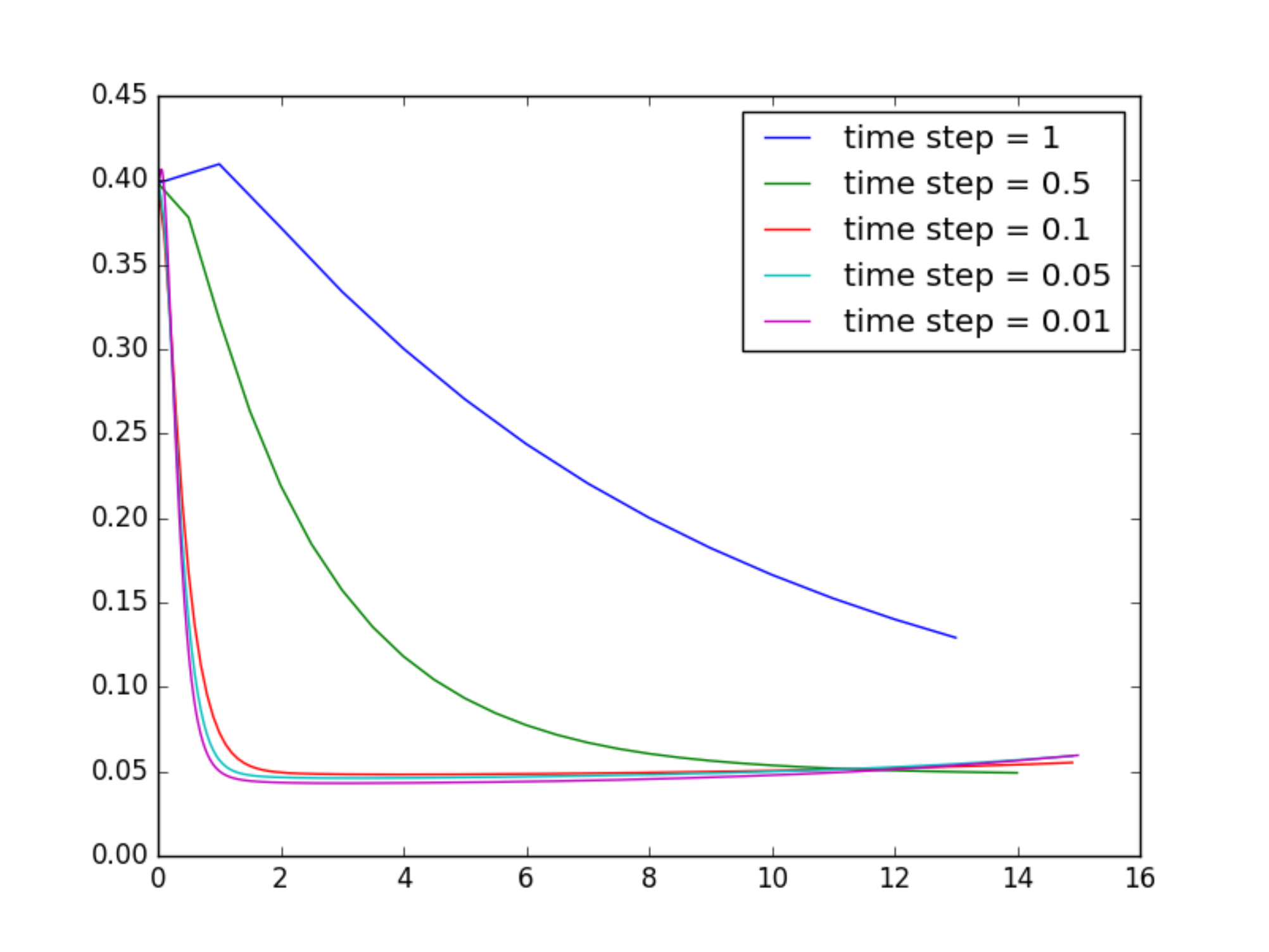}}
    \subfigure[Algorithm~\ref{pb:split} with $r=2$.]{\includegraphics[scale=0.2]{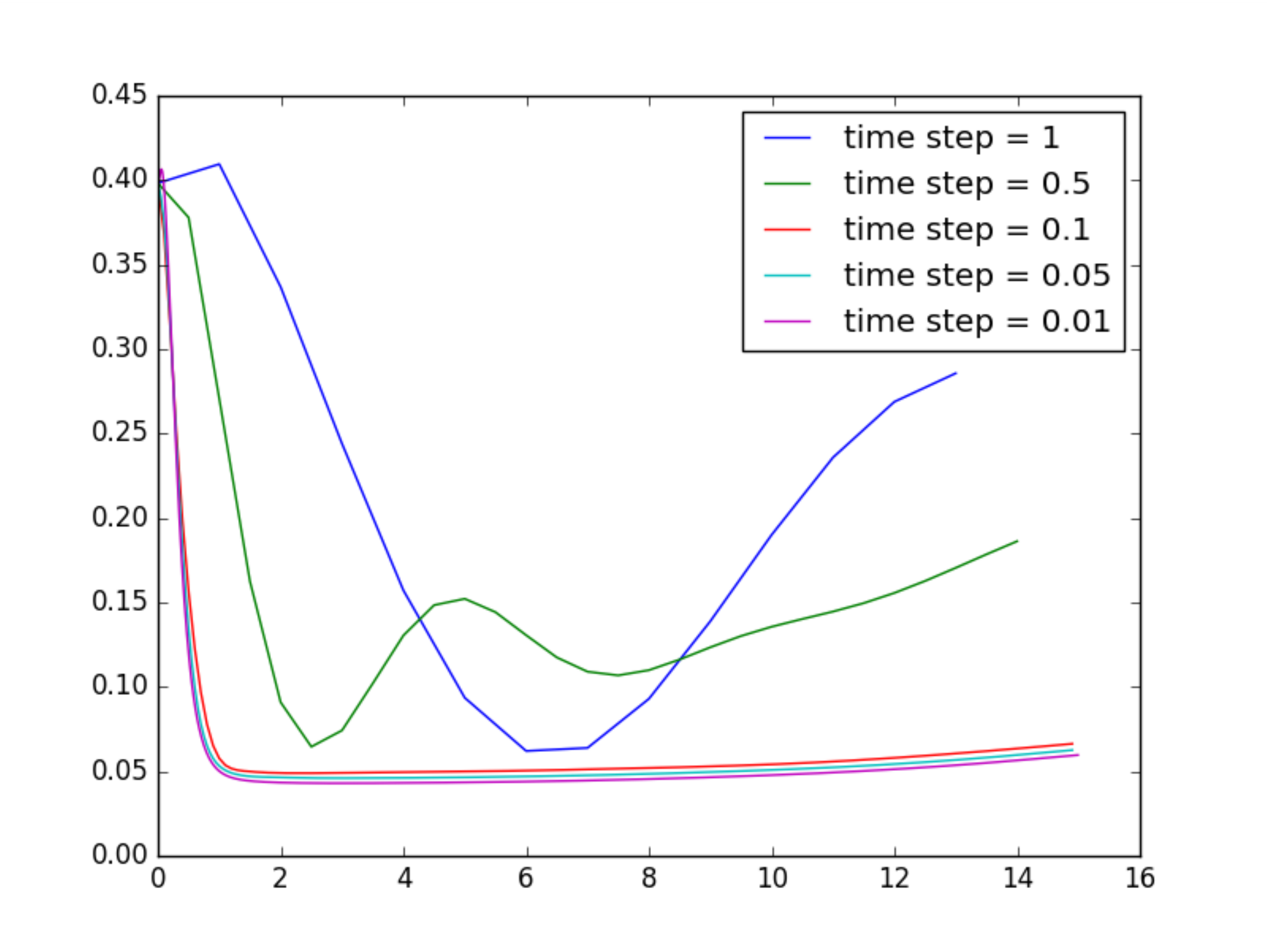}}
    \centering
    \caption{Evolution of the total energy $\mathbf{E}_h^n$ for different time-step lengths.}
\label{fig:stab}
\end{figure}

\subsection{Convergence}
In this  paragraph, we numerically investigate the convergence of Algorithms  \ref{pb:fullydiscr}  and \ref{pb:split}   
with respect to the mesh size and to the time-step length.  
We consider the same model problem as in the previous paragraph, with a
different initial configuration of the structure. More precisely, it
consists of the static equilibrium of a
circular elastic string, centered at the point
$(0.5,0.5)$ with radius $0.25$, and immersed in a fluid at rest. 

In order to check the convergence rate, we consider as reference solution the one obtained with Algorithm \ref{pb:fullydiscr} and 
the following discretization parameters:
\begin{align}\label{ref_parameters}
h_{\rm f}  =h_{\rm s}=\frac{1}{256}, \qquad\tau=5\cdot 10^{-5}.
\end{align}

Tables~\ref{tb:mono}--\ref{tb:part2} reports the spatial convergence history for Algorithms  \ref{pb:fullydiscr}  and \ref{pb:split},
respectively.  Here, the time-step length is fixed to $\dt=0.01$ and errors are evaluated at the final time $t=0.5$. 
The three schemes provide practically the same behavior and we observe a
sub-optimal rate, which is driven by the regularity of the solution.

\begin{table}[h!]
\caption{Algorithm \ref{pb:fullydiscr}. Spatial convergence for
$\tau = 0.01$.}
\renewcommand\arraystretch{1.2}
\begin{tabular}{|r|c|c|c|c|c|}
\hline
$h_{\rm f} =h_{\rm s} $ & 1/8 & \multicolumn{1}{c|}{1/16} & \multicolumn{1}{c|}{1/32} &
\multicolumn{1}{c|}{1/64} & \multicolumn{1}{c|}{1/128} \\
\hline
$\|\bold{u}_h^n-\bold{u} \|_{0,\Omega}$ & \multicolumn{1}{r|}{7.65E-3} &
5.92E-3 & 2.29E-3 & 8.56E-4 & 2.94E-4 \\
Rate & -- & 0.37 & 1.37 & 1.42 & 1.54 \\ \hline
$\|\bold{\dot{d}}_h^n-\bold{\dot{d}} \|_{0,\Sigma}$ &
\multicolumn{1}{r|}{5.43E-4} & 4.29E-4 & 2.23E-4 & 1.06E-4 & 5.93E-5 \\
Rate & -- & 0.34 & 0.94 & 1.07 & 0.84 \\ \hline
$\|\bold{d}_h^n-\bold{d} \|_{\rm s}$ & \multicolumn{1}{r|}{3E-02} & 1.58E-2
& 8.29E-3 & 4.69E-3 & 2.82E-3 \\
Rate & -- & 0.93 & 0.93 & 0.82 & 0.73 \\
\hline
\end{tabular}
\label{tb:mono}
\end{table}

\begin{table}[h!]
\caption{Algorithm~\ref{pb:split} with $r=1$. Spatial convergence for
$\tau = 0.01$.}
\renewcommand\arraystretch{1.2}
\begin{tabular}{|r|c|c|c|c|c|}
\hline
$h_{\rm f}=h_{\rm s}$ & 1/8 &  {1/16} & {1/32} &
 {1/64} &  {1/128} \\ \hline
$\|\bold{u}_h^n-\bold{u}\|_{0,\Omega}$ & \multicolumn{1}{r|}{7.61E-03} &
5.91E-3 & 2.28E-3 & 8.53E-4 & 2.91E-4 \\ 
Rate &  --  & 0.37 & 1.38 & 1.42 & 1.55 \\ \hline
$\|\bold{\dot{d}}_h^n-\bold{\dot{d}}\|_{0,\Sigma}$ &
\multicolumn{1}{r|}{5.17E-4} & 4.15E-4 & 2.19E-4 & 1.05E-4 & 5.91E-5 \\
Rate &  --  & 032 & 0.92 & 1.05 & 0.83 \\ \hline
$\|\bold{d}_h^n-\bold{d}\|_{\rm s}$ & \multicolumn{1}{r|}{2.99E-2} & 1.57E-2
& 8.28E-3 & 4.69E-3 & 2.82E-3 \\ 
Rate &  --  & 0.93 & 0.93 & 0.82 & 0.73 \\
\hline
\end{tabular}
\label{tb:part1}
\end{table}

\begin{table}[h!]
\caption{Algorithm~\ref{pb:split} with $r=2$. Spatial convergence for
$\tau = 0.01$.}
\renewcommand\arraystretch{1.2}
\begin{tabular}{|r|c|c|c|c|c|}
\hline
$h_{\rm f}=h_{\rm s}$ & 1/8 & {1/16} &  {1/32} &
 {1/64} &  {1/128} \\ \hline
$\|\bold{u}_h^n-\bold{u}\|_{0,\Omega}$ & \multicolumn{1}{r|}{7.60E-3} &
5.91E-3 & 2.28E-3 & 8.53E-4 & 2.93E-4 \\ 
Rate & --  & 0.36 & 1.38 & 1.42 & 1.54 \\ \hline
$\|\bold{\dot{d}}_h^n-\bold{\dot{d}}\|_{0,\Sigma}$ &
\multicolumn{1}{r|}{5.15E-4} & 4.16E-4 & 2.19E-4 & 1.06E-4 & 5.89E-5 \\
Rate &  -- & 0.31 & 0.93 & 1.05 & 0.84 \\ \hline
$\|\bold{d}_h^n-\bold{d}\|_{\rm s}$ & \multicolumn{1}{r|}{2.99E-2} & 1.57E-2
& 8.28E-3 & 4.69E-3 & 2.82E-3 \\ 
Rate & -- & 0.93 & 0.93 & 0.82 & 0.73 \\
\hline
\end{tabular}
\label{tb:part2}
\end{table}

We test now the convergence rate with respect to the time-step length $\dt$. We ran
tests with the following mesh sizes $h_f = h_{\rm s} = 1/64 $, varying
the time step as follows:
$
\tau \in \lbrace 1/2^i\rbrace_{i=4\ldots8}.
$
We compute the errors with respect to a reference solution obtained solving, for
each advancing scheme, the problem with $\tau_{ref} = 5E-05 s$ and
$h_{ref}^f = h_{ref}^s = 1/64 $.

We observe that the partitioned scheme with order two extrapolation results
to be stable for sufficiently small values of time step, hence we used values
of $\dt$ in the stability range. 
We can observe that the error of the partitioned scheme with order
two extrapolation, approaches the value of the monolithic error when the time
step reduces properly. This seems to be in agreement with the convergence
results in Theorem~\ref{th:error}.
As far as the order one partitioned scheme, we can see that the rates of
convergence appear to be higher. Actually, the error is much higher for big
time steps and it is close to the monolithic error for small ones.
All the errors have the same behavior as the time step goes to zero as the
theory predicts.

\begin{table}[h!]
\caption{Algorithm \ref{pb:fullydiscr}. Temporal convergence for $h_{\rm f}=h_{\rm s} = 1/64$.}
\setlength{\tabcolsep}{10pt}
\renewcommand\arraystretch{1.2}
\begin{tabular}{|r|c|c|c|c|c|c|}
\hline
$\tau$ & 1/16 & \multicolumn{1}{c|}{1/32} & {1/64} &
 {1/128} & {1/256} & {1/512} 
\\ \hline
$\|\bold{u}_h^n-\bold{u}\|_{0,\Omega}$ & \multicolumn{1}{r|}{2.65E-6} &
1.73E-6 & 1.07E-6 & 5.96E-7 & 3.13E-7 & 1.58E-7 
\\ 
Rate & -- & 0.61 & 0.69 & 0.84 & 0.93 & 0.98
\\ \hline
$\|\bold{\dot{d}}_h^n-\bold{\dot{d}}\|_{0,\Sigma}$ &
\multicolumn{1}{r|}{6.03E-6} & 4.07E-6 & 2.43E-6 & 1.32E-6 & 6.86E-7 &3.46E-7 
\\ 
Rate & -- & 0.57 & 0.74 & 0.88 & 0.95 & 0.99
\\\hline
$\|\bold{d}_h^n-\bold{d}\|_{\rm s}$ & \multicolumn{1}{r|}{4.44E-4} & 2.22E-4
& 1.11E-4 & 5.52E-5 & 2.74E-5 & 1.35E-5 
\\ 
Rate &  & 1.00 & 1.00 & 1.00 & 1.01 & 1.02 
\\ \hline
\end{tabular}
\label{tb:monot}
\end{table}

{\small
\begin{table}[h!]
\caption{Algorithm~\ref{pb:split} with $r=1$. Temporal convergence for $h_{\rm f}=h_{\rm s} = 1/64$.}
\setlength{\tabcolsep}{10pt}
\renewcommand\arraystretch{1.2}
\begin{tabular}{|r|c|c|c|c|c|c|}
\hline
$\tau$ & 1/16 & \multicolumn{1}{c|}{1/32} & {1/64} &
 {1/128} & {1/256}  & {1/512} 
 \\ \hline
$\|\bold{u}_h^n-\bold{u}\|_{0,\Omega}$ & \multicolumn{1}{r|}{2.40E-4} &
9.90E-5 & 3.08E-5 & 6.86E-6 & 1.57E-6  & 4.04E-7
 \\
Rate & -- & 1.28 & 1.69 & 2.17 & 2.12 & 1.96
\\ \hline
$\|\bold{\dot{d}}_h^n-\bold{\dot{d}}\|_{0,\Sigma}$ &
\multicolumn{1}{r|}{1.6E-4} & 4.36E-5 & 1.29E-5 & 3.63E-6 & 1.11E-6  & 4.02E-07 
\\ 
Rate &  & 1.87 & 1.75 & 1.84 & 1.71  & 1.46
\\ \hline
$\|\bold{d}_h^n-\bold{d}\|_{s}$ & \multicolumn{1}{r|}{1.81E-3} & 1.08E-3
& 4.37E-4 & 1.05E-4 & 3.33E-5   & 1.42E-5
 \\ 
Rate & -- & 0.75 & 1.30 & 2.06 & 1.65  & 1.23 
\\ \hline
\end{tabular}
\label{tb:part1t}
\end{table}
}

\begin{table}[h!]
\caption{Algorithm~\ref{pb:split} with $r=2$.  Temporal convergence for $h_{\rm f}=h_{\rm s} = 1/64$.}
\setlength{\tabcolsep}{10pt}
\renewcommand\arraystretch{1.2}
\begin{tabular}{|r|c|c|c|c|c|c|c|}
\hline
$\tau$ & 1/16 &  {1/32} & {1/64} &
{1/128} &  {1/256}  &{1/512} 
\\ \hline
$\|\bold{u}_h^n-\bold{u}\|_{0,\Omega}$ & \multicolumn{1}{r|}{2.21E-4} &
6.34E-5 & 4.64E-6 & 6.39E-7 & 3.17E-7 & 1.59E-7 
\\ 
Rate & -- & 1.81 & 3.77 & 2.86 & 1.01  & 0.99 
\\ \hline
$\|\bold{\dot{d}}_h^n-\bold{\dot{d}}\|_{0,\Sigma}$ &
\multicolumn{1}{r|}{8.32E-5} & 6.06E-5 & 6.04E-6 & 1.40E-06 & 6.83E-7 &3.40E-07 
\\ 
Rate & -- & 0.46 & 3.33 & 2.11 & 1.03 & 1.01
\\ \hline
$\|\bold{d}_h^n-\bold{d}\|_{\rm s}$ & \multicolumn{1}{r|}{1.20E-3} & 6.03E-4
& 1.26E-4 & 5.50E-5 & 2.73E-5 & 1.35E-05 
\\
Rate & -- & 0.98 & 2.25 & 1.20 & 1.01 & 1.02 
\\ \hline
\end{tabular}
\label{tb:part2t}
\end{table}
We report the numerical values of the error and the computed convergence
rates in Tables~\ref{tb:monot}, \ref{tb:part1t} and \ref{tb:part2t}.
All the schemes provide a rate of converges which is about 1 confirming the
theoretical results of Theorem~\ref{th:error}.

\subsection{Temporal accuracy} 

In order to illustrate  the accuracy of Algorithms  \ref{pb:fullydiscr}  and \ref{pb:split}, we show the evolution of two nodes on the structure during
\begin{figure}[h!]
\centering 
\includegraphics[scale=0.6]{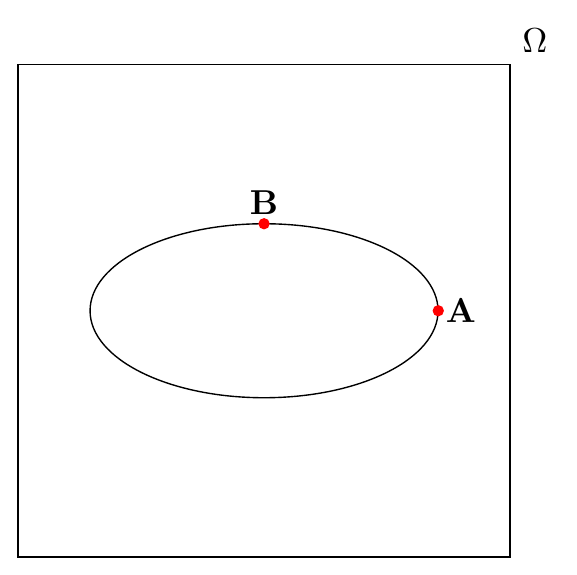}
\caption{Computational domain and interface with the control points $A$ and $B$.}
\label{fig:ellipse}
\end{figure}
the simulation relative to the ellipsoidal structure evolving to a circular
configuration.
At the beginning of the numerical test, the major and minor axes of the ellipse
are aligned with the abscissa en coordinate axes, respectively, see
Figure~\ref{fig:ellipse}.

\begin{figure}[h!]
\center
\subfigure[ $\dt = 0.1$.]
{\includegraphics[scale=0.2]{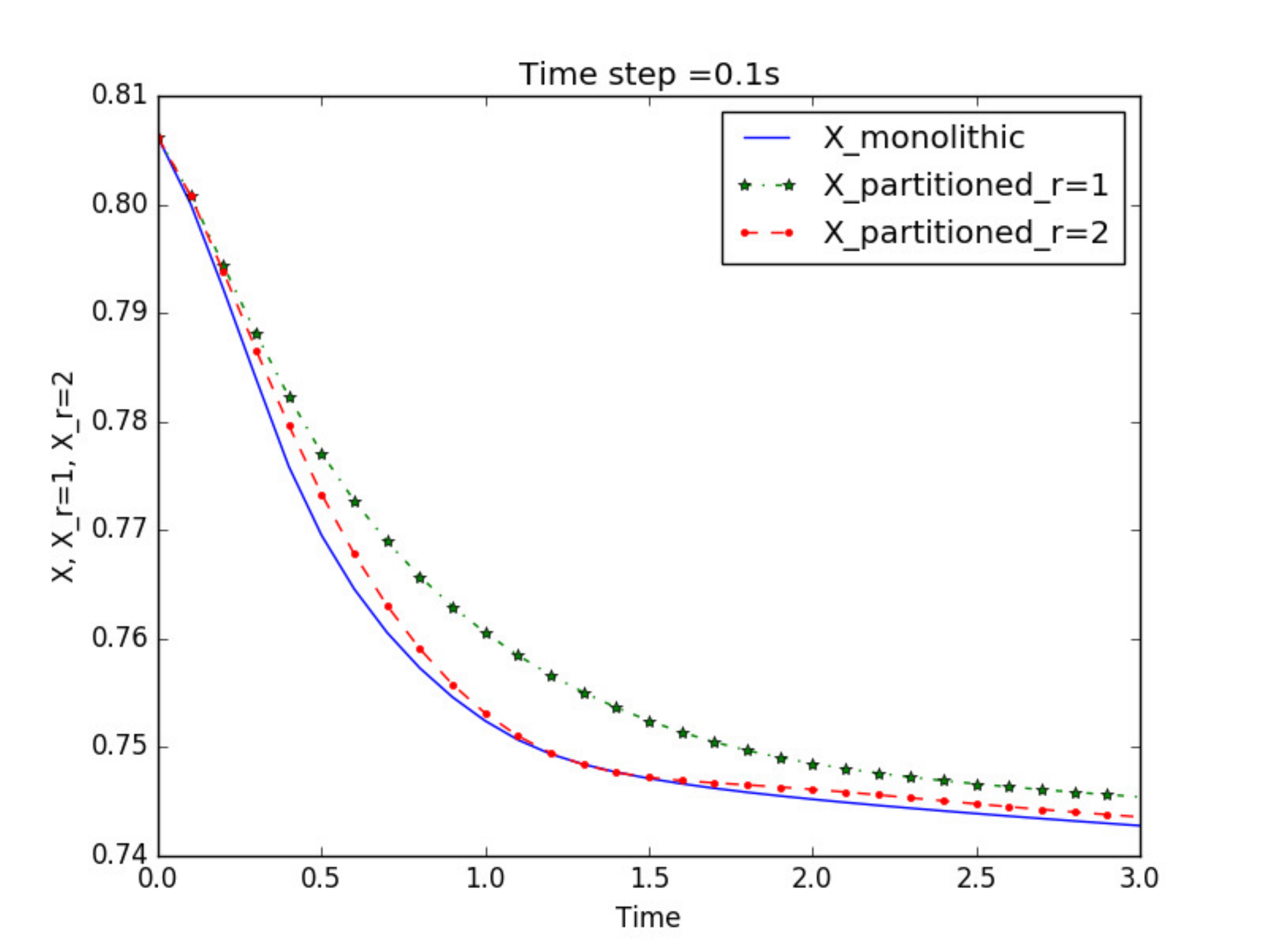}}
\subfigure[$\dt = 0.05$.]
{\includegraphics[scale=0.2]{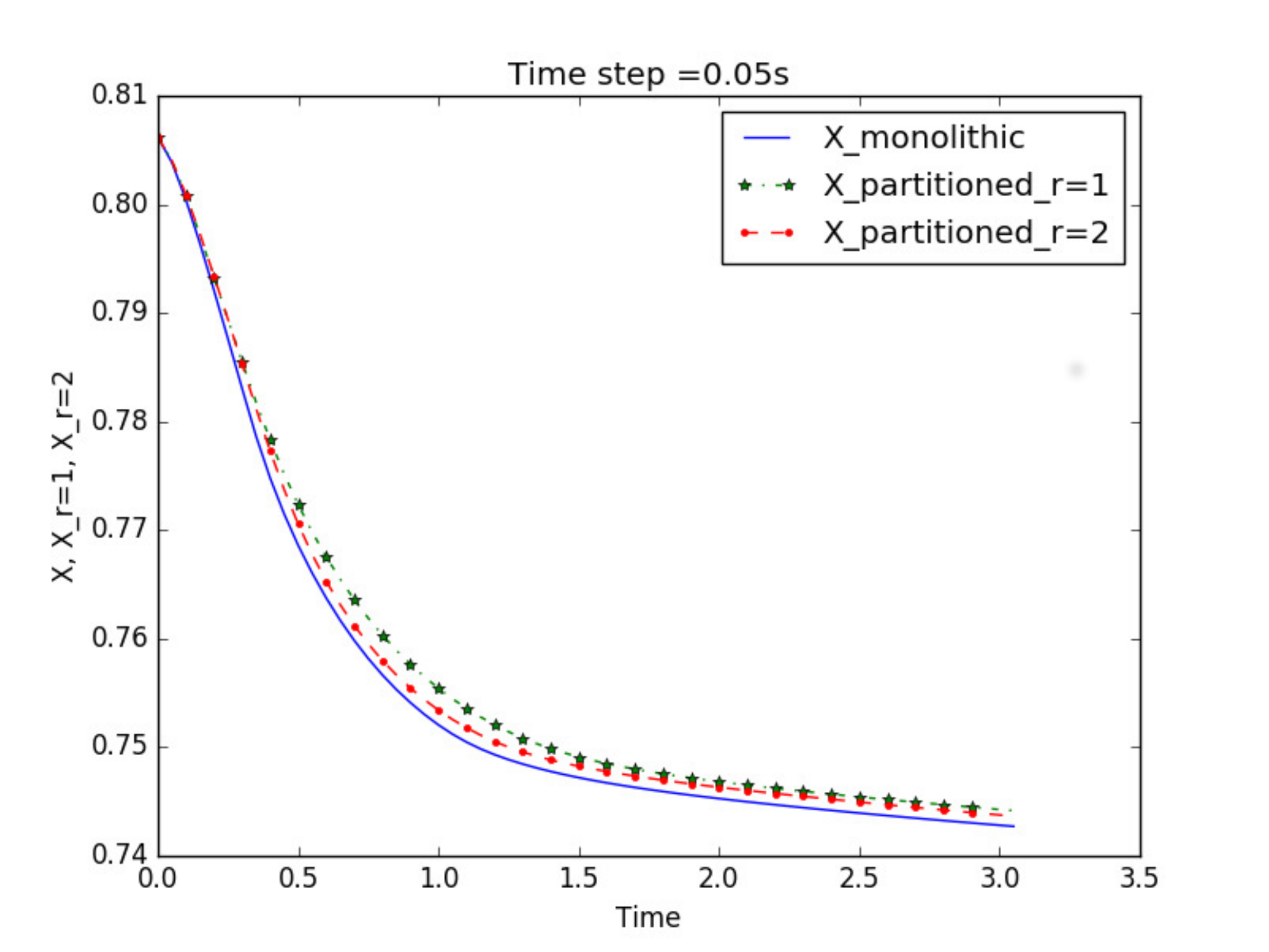}}
\subfigure[ $\dt = 0.01$.]
{\includegraphics[scale=0.2]{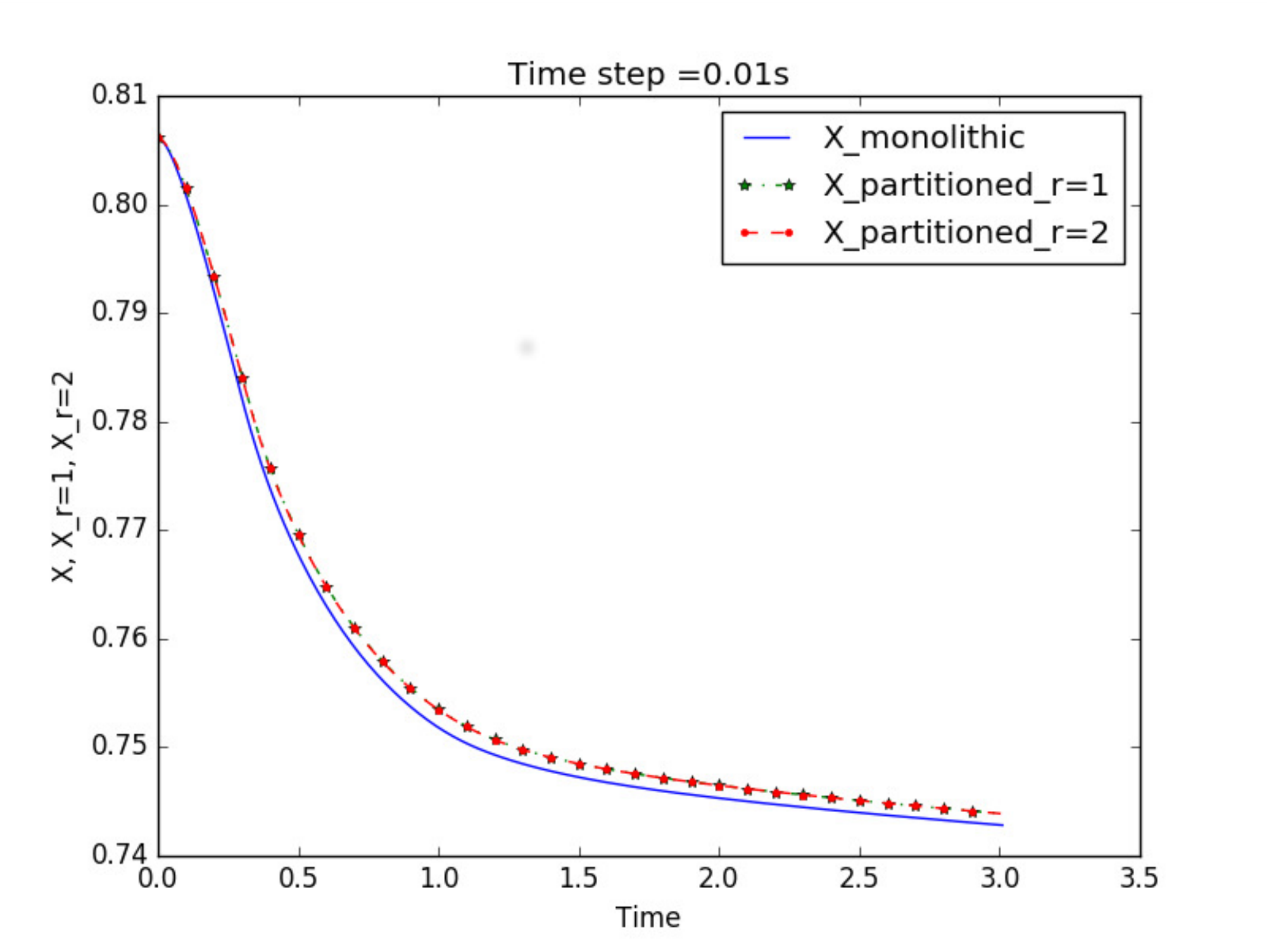}}
\caption{Evolution of the abscissa of point $A$ for different time-step lengths.}
\label{fig:evolA}
\end{figure}
\begin{figure}[h!]
\center
\subfigure[$\dt = 0.1$.]
{\includegraphics[scale=0.2]{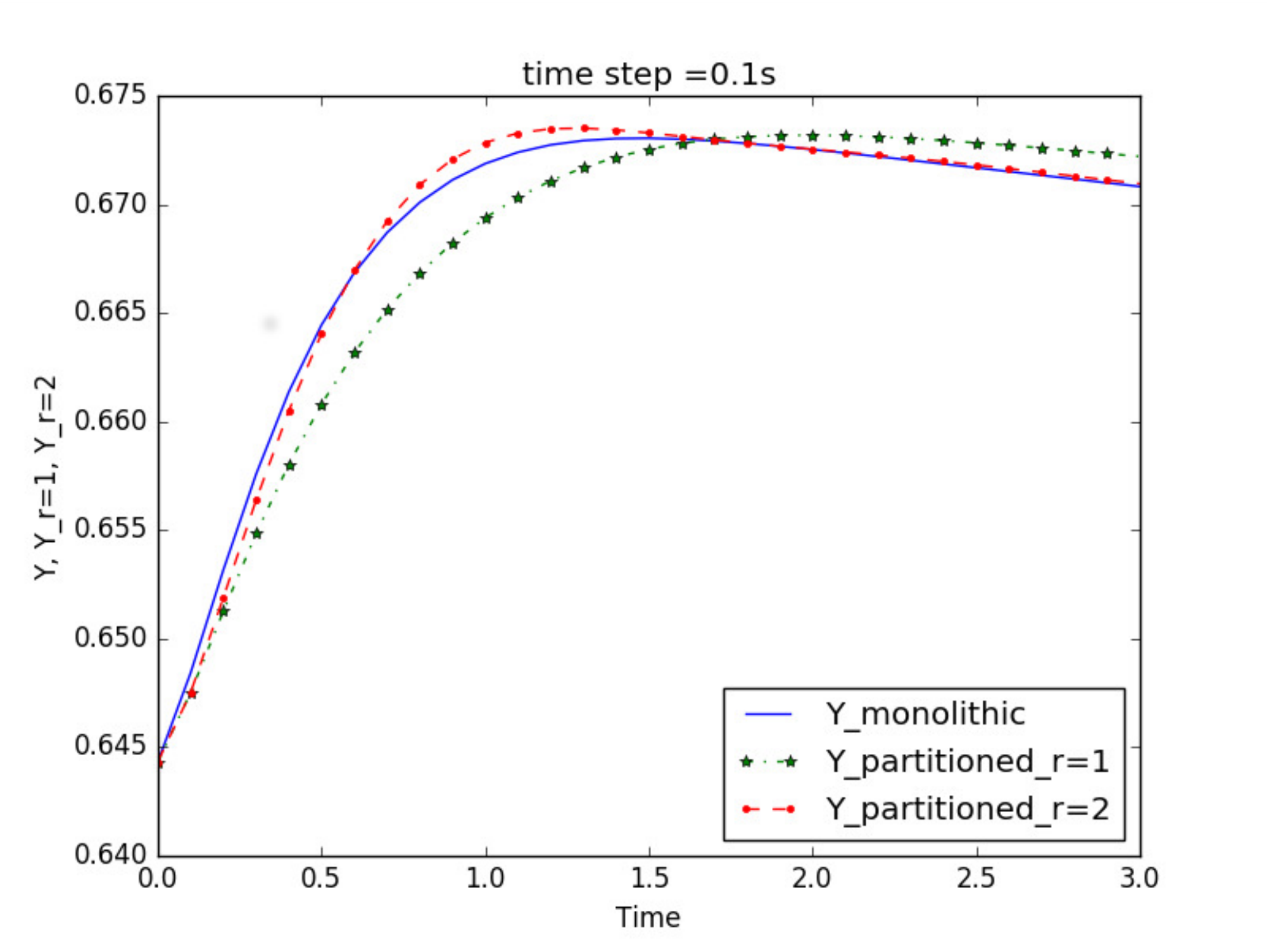}}
\subfigure[$\dt = 0.05$.]
{\includegraphics[scale=0.2]{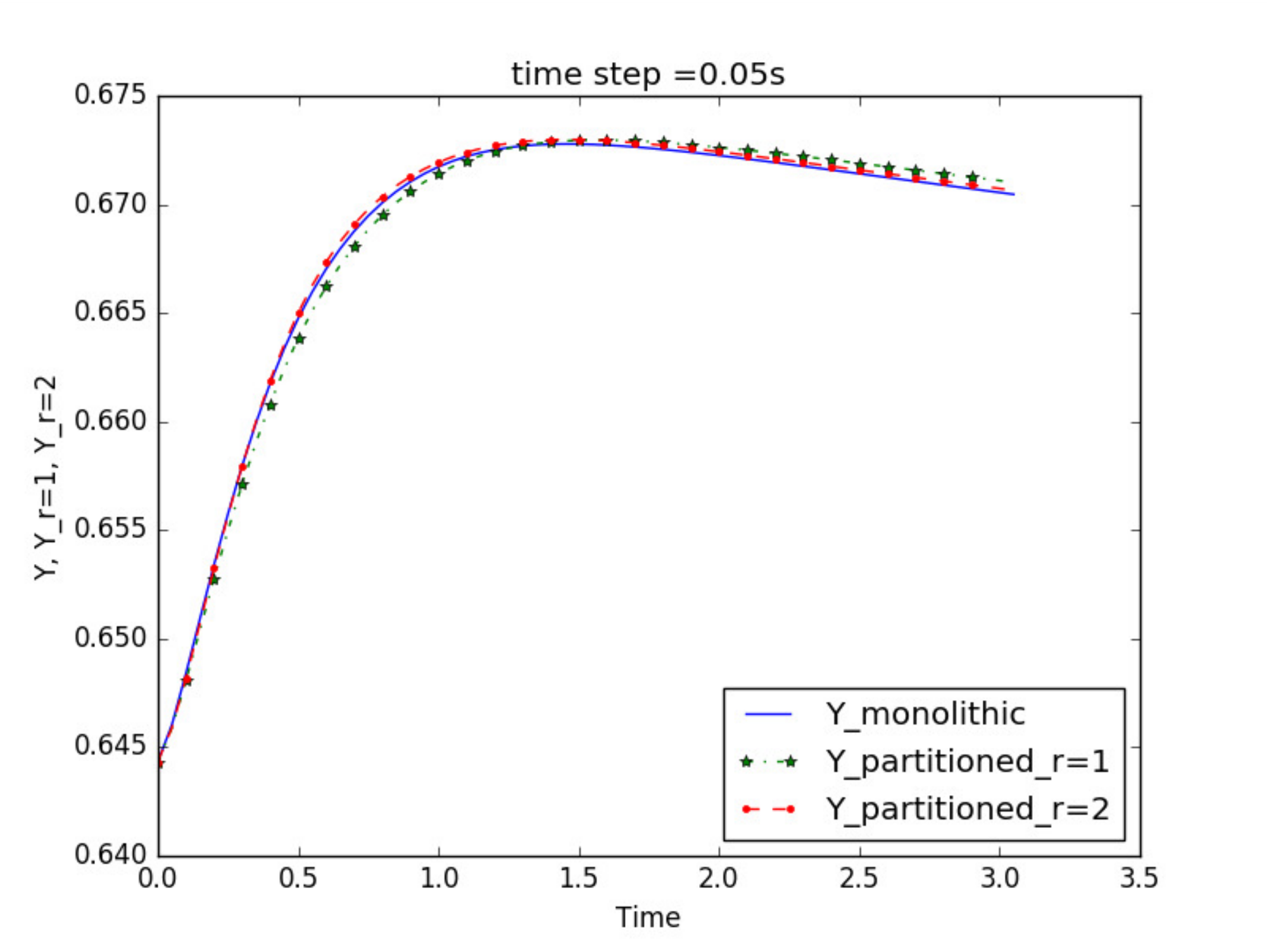}}
\subfigure[$\dt = 0.01$.]
{\includegraphics[scale=0.2]{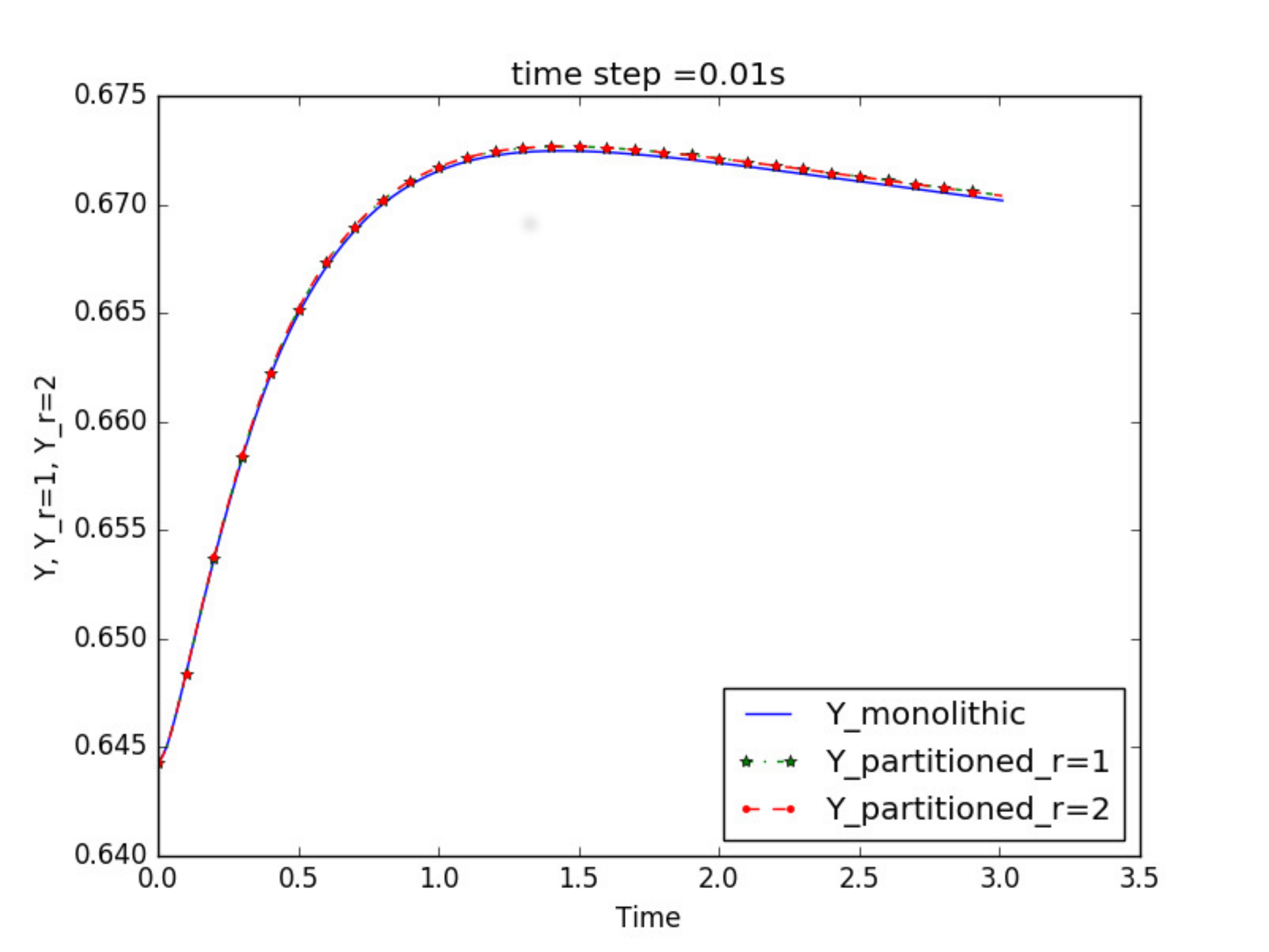}}
\caption{Evolution of the ordinate of point $B$ for different time-step lengths.}
\label{fig:evolB}
\end{figure}

Figures~\ref{fig:evolA} and \ref{fig:evolB}   show the evolutions of the abscissa of $A$ and of the ordinate of $B$, respectively, for different time-step 
lengths: $\dt=0.1,\, 0.05,\, 0.01$. The impact of the extrapolation order $r$ on the accuracy of Algorithm \ref{pb:split}  is clearly visible with the coarsest discretization. 
 Indeed, for $\dt=0.1$ we observe that the accuracy of Algorithm \ref{pb:split} with $r=2$ is superior to $r=1$. After time-step refinement,  $\dt=0.05,\, 0.01$, this difference 
 is  negligible and Algorithms \ref{pb:fullydiscr} and  \ref{pb:split} provide very close approximations. These numerical findings  are in agreement with relation  \eqref{eq:dn12}, 
 which shows that Algorithm  \ref{pb:split} can be seen as a kinematic perturbation of Algorithm \ref{pb:fullydiscr}. The size of this perturbation depends on both the extrapolation
  order $r$ and the time-step length $\dt$.

\section*{Acknowledgements}

The third author is member of the INdAM Research group GNCS and her research
is partially supported by IMATI/CNR and by PRIN/MIUR.

\bibliographystyle{plain}
\bibliography{paper}

\end{document}